\newtheorem {theorem}{Theorem}%[section]
\newtheorem {proposition}[theorem]{Proposition}
\newtheorem {corollary}[theorem]{Corollary}
\newtheorem {lemma}[theorem]{Lemma}
\theoremstyle{definition}
\newtheorem{definition}{Definition}
\newtheorem{example}{Example}
\theoremstyle{remark}
\newtheorem{remark}{Remark}
\newcommand{\al}{\alpha}
\newcommand{\shift}{q}
\newcommand{\si}{\sigma}
\newcommand{\er}{\varepsilon}
\newcommand{\za}{\zeta}
\newcommand{\ff}{w}
\newcommand{\FF}{\Phi}
\newcommand{\hg}{h}
\newcommand{\RRe}{\mathrm{Re\,}}
\newcommand{\IIm}{\mathrm{Im\,}}
\newcommand{\GG}{g}
\newcommand{\ee}{n}
\newcommand{\we}{w}
\newcommand{\xx}{x^\prime}
\newcommand{\hol}{\mathcal{H}ol}
\newcommand{\dom}{\mathcal D}
\newcommand{\Dbb}{\mathbb D}
\newcommand{\cd}{{\mathbb{C}}^d}
\newcommand{\Rbb}{\mathbb R}
\newcommand{\RRq}{{\mathbb R}^q}
\newcommand{\Cbb}{\mathbb C}
\newcommand{\Nbb}{\mathbb N}
\newcommand{\Zbb}{\mathbb Z}
\newcommand{\pspc}{\mathrm{pspc}}
\newcommand{\app}{\mathrm{map}}
\newcommand{\trop}{\mathrm{trop}}
\newcommand{\har}{\mathrm{har}}
\newcommand{\bd}{B_d}
\begin{document}

\title[Approximation by proper holomorphic maps]{Approximation by proper holomorphic maps and tropical power series}

\author[E.~Abakumov]{Evgeny Abakumov}

\address{Universit\'e Paris-Est, LAMA (UMR 8050), 77454 Marne-la-Vall\'ee, France}

\email{evgueni.abakoumov@u-pem.fr}

\author[E.~Doubtsov]{Evgueni Doubtsov}

\address{St.~Petersburg Department
of V.A.~Steklov Mathematical Institute,
Fontanka 27, St.~Petersburg
191023, Russia}

\email{dubtsov@pdmi.ras.ru}

\thanks{The second author was supported by the Russian Science Foundation (grant No.~14-41-00010).}

\subjclass[2010]{Primary 30D15; Secondary 14T05, 26A12, 30H99, 32A15, 32H35, 41A58, 42A55, 46E15}

\date{}

\keywords{Entire function, radial weight, proper holomorphic map, tropical power series, essential weight,
log-convex function}

\maketitle

\begin{abstract}
Let $\we$ be an unbounded radial weight on the complex plane.
We study the following approximation problem:
find a proper holomorphic map $f: \Cbb\to \Cbb^n$
such that $|f|$ is equivalent to $\we$.
We give several characterizations of those $\we$
for which the problem is solvable.
In particular, a constructive characterization is given in terms of tropical power series.
Moreover, the following natural objects and properties are involved:
essential weights on the complex plane,
approximation by power series with positive coefficients,
approximation by the maximum of a holomorphic function modulus.
Extensions to several complex variables and approximation by harmonic maps
are also considered.
\end{abstract}

%% main text
\section{Introduction}\label{s_int}

Let $\dom$ denote the complex plane $\Cbb$ or the unit disk $\Dbb$ of $\Cbb$.
For $R=1$ or $R=+\infty$, let $\we: [0, R)\to (0, +\infty)$ be \textsl{a weight function},
that is, let $\we$ be non-decreasing, continuous and unbounded.
Setting $\we(z) = \we(|z|)$ for $z\in\dom$, we extend $\we$
to \textsl{a radial weight} on $\dom$.

Given a set $X$ and functions $u, v: X\to (0,+\infty)$, we write $u\asymp v$
and we say that $u$ and $v$ are \textsl{equivalent} if
\[
C_1 u(x) \le v(x) \le C_2 u(x), \quad x\in X,
\]
for some constants $C_1, C_2 >0$.

\subsection{Approximation by proper holomorphic maps}\label{ss_app_proper}
In the present paper, we are primarily interested in
the following approximation property:

\begin{definition}\label{def_n_app}
%Let $n\in\Nbb$.
%, $n\ge 2$.
A radial weight $\we$ on $\dom$ is called \textsl{approximable by a holomorphic map}
(in brief, $\we\in\dom_{\app}$) if there exists $n\in\Nbb$ and a holomorphic map $f: \dom\to \Cbb^n$ such that
\[
|f(z)| \asymp \we(z), \quad z\in\dom.
%\eqno{(\dom_{\app})}
\]
\end{definition}

Let $\hol(\dom)$ denote the space of holomorphic functions on $\dom$.

\begin{remark}\label{r_mod}
Clearly, $\we\in\dom_{\app}$ if and only if
$\we$ is \textsl{approximable by a finite sum of moduli}, that is,
there exist $f_1, f_2,\dots, f_n \in \hol(\dom)$, $n\in\Nbb$, such that
\[
|f_1(z)| + |f_2(z)|+\cdots +|f_n(z)| \asymp \we(z), \quad z\in\dom.
\]
In what follows, we often use the above property in the place of $\we\in\dom_{\app}$.
\end{remark}

For $\dom=\Dbb$, a solution of the problem under consideration
is given in \cite{AD15}.
By definition, a function $v: [0,R)\to (0,+\infty)$ is called \textsl{log-convex}
if $\log v(t)$ is a convex function of $\log t$, $0<t<R$.
We write $\we\in\dom_{\log}$ if $\we$ is equivalent to a log-convex radial weight on $\dom$.
Here and in what follows, we freely exchange a weight function and its extension to a radial weight.

\begin{theorem}[{\cite[Theorem~1.2]{AD15}}]\label{t_blms}
Let $\we$ be a radial weight on $\Dbb$. Then the following properties are equivalent:
\begin{itemize}
  \item $\we\in\Dbb_{\app};$
  \item $\we\in\Dbb_{\log}.$
\end{itemize}
\end{theorem}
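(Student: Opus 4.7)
The plan is to treat the two implications of Theorem~\ref{t_blms} separately: the forward implication $\we\in\Dbb_{\app}\Rightarrow\we\in\Dbb_{\log}$ is a soft consequence of the maximum principle, while the converse demands an explicit construction via a parity-split lacunary series in two coordinates.

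For the forward direction, suppose $f=(f_1,\dots,f_n)\in\hol(\Dbb)^n$ satisfies $|f(z)|\asymp\we(|z|)$. Since each $\log|f_j|^2$ is subharmonic, so is $\log|f|^{2}=\log\sum_{j}|f_j|^{2}$, and the subharmonic version of Hadamard's three-circles theorem then gives that
\[
M(r):=\max_{|z|=r}|f(z)|
\]
is a log-convex function of $\log r$. From $\we(r)\asymp M(r)$ we conclude $\we\in\Dbb_{\log}$.

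For the converse, I would pass to an equivalent log-convex representative of $\we$, so that $\log\we(e^{t})$ is convex and unbounded on $(-\infty,0)$. A preliminary one-variable approximation lemma should then produce strictly increasing integer exponents $n_k\in\Nbb$, positive coefficients $a_k$, and break-points $0<r_1<r_2<\dots$ with $r_k\to 1$ such that $\we(|z|)\asymp a_k|z|^{n_k}$ on the annulus $A_k=\{r_{k-1}\le|z|\le r_k\}$ and
\[
\we(r)\asymp\max_k a_k r^{n_k}\asymp\sum_k a_k r^{n_k}.
\]
An additional thinning step (keeping every other break-point and rescaling the surviving $a_k$ by a bounded factor) then lets me impose the quantitative gap condition $(n_{k+2}-n_{k+1})(\log r_{k+1}-\log r_k)\ge c_0$ with $c_0$ as large as desired.

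I would then set
\[
f_1(z)=\sum_{k\text{ even}}a_k z^{n_k},\qquad f_2(z)=\sum_{k\text{ odd}}a_k z^{n_k},
\]
and show $|f_1(z)|+|f_2(z)|\asymp\we(|z|)$. The upper bound follows from the triangle inequality together with $\sum_k a_k|z|^{n_k}\asymp\we(|z|)$. For the lower bound, fix $z\in A_k$ with $k$ even: the gap condition forces the even terms $a_j|z|^{n_j}$ with $j\ne k$ to decay geometrically in $|j-k|$, so with $c_0$ sufficiently large their total is at most $\tfrac12 a_k|z|^{n_k}$, and the reverse triangle inequality yields $|f_1(z)|\ge\tfrac12 a_k|z|^{n_k}\asymp\we(|z|)$. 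Odd annuli are treated symmetrically through $f_2$, and Remark~\ref{r_mod} converts the resulting estimate into $\we\in\Dbb_{\app}$ with $n=2$. The main obstacle I anticipate is the preliminary approximation lemma, which must deliver integer exponents, wide-enough annuli, and the comparison between the pointwise maximum and the series sum simultaneously; this is a purely one-dimensional convex-analytic problem, but it is where most of the technical work lives.
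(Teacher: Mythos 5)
The paper does not actually prove Theorem~\ref{t_blms}: it is quoted from \cite{AD15}. The closest argument in the paper is the parallel construction of Section~\ref{s_induction} for the implication $\we\in\Cbb_{\trop}\Rightarrow\we\in\Cbb_{\app}$, and your plan is the same in spirit: approximate $\FF_\we$ from below by a separated family of lines with integer slopes (monomials) and split the resulting series with positive coefficients into congruence classes of indices. Your forward direction is correct and is exactly the subharmonic three-circles argument invoked in Remark~\ref{r_T2}.

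The genuine gap is in your thinning step. First, ``keeping every other break-point and rescaling the surviving $a_k$ by a bounded factor'' can destroy the lower bound $\max_k a_k r^{n_k}\gtrsim\we(r)$: if a discarded monomial dominates on a long annulus, then on that annulus the surviving monomials fall short of $\we$ by a factor whose logarithm is of order (slope gap)$\times$(length of the interval in $\log r$), which is unbounded in general, and a bounded rescaling cannot repair an unbounded deficit. Second, it need not produce the gap condition either: if the exponents increase by $1$ at each step while the dominance intervals shrink rapidly, deleting every other term still leaves $(n_{k+2}-n_{k+1})(\log r_{k+1}-\log r_k)$ arbitrarily small. What a two-series (parity) split really needs is separation of the selected monomials at index-distance $2$ uniformly on each dominance annulus, and this is precisely the delicate point: the greedy selection used in the paper (take the farthest next line whose intersection with the current one stays above $\FF_\we-\hg$) only yields separation at index-distance $3$ (Lemma~\ref{l_real_picture}, estimate \eqref{e_iiiLem}), which is why the paper splits into \emph{three} series, by residues mod $3$. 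Since Theorem~\ref{t_blms} only asserts $\we\in\Dbb_{\app}$ for \emph{some} $n$, you can sidestep the difficulty entirely: run your preliminary lemma (for the disk, log-convex weights are indeed approximable from below by monomials with uniform constants --- this is the content of \cite{BDL99}, \cite{AD15}, and it does require proof, as you note), then perform the greedy selection with parameter $\hg\ge 4$ on $x<0$ and split mod $3$ exactly as in subsection~\ref{ss_abs_prf}; this closes the converse with $n=3$. Obtaining $n=2$, as \cite{AD15} does, requires a more careful selection argument than the one you describe.
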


%\begin{remark}\label{r_max_proper}
A holomorphic map $f:\dom\to\Cbb^n$ is called \textsl{proper} if the preimage of every compact set is compact.
 Since $\we$ is assumed to be unbounded, a holomorphic map $f$ with property~$f\in \dom_{\app}$
 is a proper one. In fact, this observation is one of the motivations for the present paper.
Indeed, J.~Globevnik \cite{G02Edin} proved that a proper holomorphic embedding $f: \Dbb\to\Cbb^2$
may grow arbitrarily rapidly; also, he asked whether such an embedding may grow arbitrarily slowly.
By \cite[Theorem~1.2]{AD15}, if $\we\in\Dbb_{\log}$, then the property $\we\in \Dbb_{\app}$ holds with $n=2$.
Using this result, one concludes that a proper holomorphic immersion $f: \Dbb\to\Cbb^2$
or a proper holomorphic embedding $f: \Dbb\to\Cbb^3$ may have arbitrary growth
(cf.~\cite[Corollaries~2.3 and 2.4]{ADcras}).
  See Section~\ref{ss_proper} for further details related to this issue
  when $\dom$ is $\cd$ or the unit ball of $\cd$, $d\ge 1$.
%\end{remark}

Trivial examples $\we_k(t) = 1+t^{k+\frac{1}{2}}$, $k\in\Nbb$, show that
there is no direct analog of Theorem~\ref{t_blms} for $\dom=\Cbb$.
Moreover, there is no such analog if $\we: [0, +\infty)\to (0, +\infty)$
is assumed to be \textsl{rapid}, that is,
\[
\lim_{t\to\infty} {t^{-k}}\we(t) =\infty\quad \textrm{for all\ } k\in\Nbb.
\]
Indeed, there exists a rapid radial weight $\we$ on $\Cbb$ such that $\we$ is log-convex but is not
approximable by a holomorphic map; in particular, we obtain such a weight using Example~3.3 from \cite{BBT98}.

In the present paper, we prove that the property $\we\in \Cbb_{\app}$
is still equivalent to several natural or well-known conditions
formulated below.
The key constructive property is that of being equivalent to \textsl{a log-tropical} weight function
($\we\in \dom_{\trop}$).
In fact, the differences between $\dom=\Dbb$ and $\dom=\Cbb$ are illustrated by the following observation:
while $\we\in \Dbb_{\log}$ if and only if $\we\in \Dbb_{\trop}$, the property $\we\in \Cbb_{\log}$
does not imply $\we\in \Cbb_{\trop}$
even for rapid weights $\we$.
Also, we give explicit sufficient conditions and necessary conditions
related to the growth of the rapid radial weight under consideration.

\subsection{Related approximation problems and properties}
Given a radial weight $\we$ on $\dom$, \textsl{the associated weight} $\widetilde{\we}$
is defined as
\[
\widetilde{\we}(z) = \sup\left\{|f(z)|: f\in\hol(\dom),\ |f|\le\we \textrm{\ on\ } \dom\right\},
\quad z\in\dom.
\]
The notion of associated weight naturally arises in the studies of the growth space $\mathcal{A}^\we(\dom)$
which consists of $f\in\hol(\dom)$ such that
\[
\|f\|_{\mathcal{A}^\we(\dom)} = \sup_{z\in\dom}\frac{|f(z)|}{\we(z)} < \infty.
\]
The definition of $\widetilde{\we}$ was formally introduced in \cite{BBT98} in a more general setting;
see \cite{BBT98} for basic properties of $\widetilde{\we}$.
In particular, $\widetilde{\we}$ is a radial weight,
so the associated weight function $\widetilde{\we}: [0,R)\to (0, +\infty)$ is correctly
defined.
Also, $\widetilde{\we}$ is known to be log-convex (see, for example, \cite[the discussion after Corollary~1.6]{BBT98}).

Clearly, the growth space $\mathcal{A}^{\we}(\dom)$ is equal to $\mathcal{A}^{\widetilde{\we}}(\dom)$ isometrically.
Also, many results related to $\mathcal{A}^{\we}(\dom)$ are formulated in terms of $\widetilde{\we}$, thus,
it is important to distinguish those $\we$ which are equivalent to $\widetilde{\we}$.

\begin{definition}[see \cite{BBT98}]\label{d_ess}
A weight function $\we: [0, R) \to (0, +\infty)$ is called \textsl{essential}
(in brief, $\we\in\dom_{\mathrm{ess}}$) if
\[
\widetilde{\we}(t) \asymp \we(t), \quad 0\le t<R.
%\eqno{(\dom_{\mathrm{ess}})}
\]
\end{definition}

\begin{definition}\label{d_max}
A weight function $\we: [0, R) \to (0, +\infty)$ is called \textsl{approximable
by the maximum of a holomorphic function modulus}
(in brief, $\we\in\dom_{\max}$) if there exists $f\in\hol(\dom)$ such that
\[
M_f(t) \asymp \we(t), \quad 0\le t<R,
%\eqno{(\dom_{\max})}
\]
where $M_f(t) = \max\{|f(z)|: |z|=t\}$.
\end{definition}
Recall that Hadamard's three-circles theorem says that $M_f(t)$ is a log-convex function.

\begin{definition}\label{d_pspc}
We say that a weight function $\we: [0, R) \to (0, +\infty)$ is \textsl{approximable
by power series with positive coefficients}
(in brief, $\we\in\dom_{\pspc}$) if there exist $a_k \ge 0$, $k=0,1,\dots$, such that
\[
\sum_{k=0}^\infty a_k t^k \asymp \we(t), \quad 0\le t<R.
%\eqno{(\dom_{\pspc})}
\]
\end{definition}
Conditions related to
the property $\we\in\Cbb_{\pspc}$ are of interest in weighted polynomial approximation problems
(see, for example, \cite{Koo64}, \cite{Lu89}, \cite{Mer58}) and in numerical applications.

%%Given a weight function $\we: [0, R) \to (0< +\infty)$
%To investigate the property $\we\in \Cbb_{\pspc}$, P.~Erd{\"o}s and T.~K{\"o}v{\'a}ri \cite{EK56} and
%U.\,Schmid \cite{Sch95, Sch98} use the function
%\[
%P_\we(t) = \max\left\{\frac{t^n}{u_n}:\ n=0,1,\dots  \right\},
%\]
%where
%\[
%u_n = \sup\left\{\frac{t^n}{\we(t)}:\ 0\le t< R \right\}, \quad n=0,1,\dots.
%\]
%In other words, one considers the pointwise maximum of the monomials $y_n(t) = a_n t^n$
%such that $y_n(t) \le \we(t)$ and $y_n(t)$ reaches $\we(t)$ from below.
%Clearly, $P_\we(t) \le \we(t)$.
%So, the reverse inequality
%\[
%\we(t) \le C  P_\we(t), \quad 0\le t< R,
%\eqno{(\dom_{\mathrm{mon}})}
%\]
%is of interest with a constant $C > 1$.
Finally, we consider a constructive approximation property related to
basic notions of \textsl{tropical geometry}.
Recall that \textsl{a tropical polynomial} in one variable is defined as
\[
\Psi(x) = \sup_{j\in E} (b_j + j x), \quad x\in\Rbb,\ b_j\in\Rbb,\ j\in E,
\]
where $E$ is a finite subset of $\Zbb_+$.
Such polynomials are objects of tropical geometry
(see, for example, monograph \cite{IMS09book}).
Following Kiselman \cite{Ki14}, we say that $\Psi(x)$
is \textsl{a tropical power series} if $E$ is an arbitrary subset of $\Zbb_+$
and the supremum under consideration is finite for all $x$.
Here we assume that $-\infty < x < +\infty$ or $-\infty < x <0$.

Given a weight function $v: [0, R)\to (0, +\infty)$, consider its logarithmic transformation
\[
\FF(x) = \FF_v(x) = \log v(e^x), \quad -\infty< x < \log R,
\]
where $\log (+\infty) = +\infty$.
Observe that $v$ is log-convex if and only $\FF_v$ is convex.
We say that $v$ is \textsl{log-tropical} if $\FF_v$ is a tropical power series.
Any tropical power series is a convex function, hence, any log-tropical weight is log-convex.

\begin{definition}\label{d_trop}
Given a weight function $\we: [0, R) \to (0, +\infty)$,
we write $\we\in\dom_{\trop}$ if $\we$ is \textsl{equivalent to a log-tropical weight function}.

%\begin{example}
%Let $\alpha > 1$, and let $w_\alpha$ be a weight function such that $\we_\alpha(t) = e^{(\log t)^\alpha}$, $t> e$.
%Then $w_\alpha$ is equivalent to a log-tropical weight function if and only if $\alpha\ge 2$.
%\end{example}
%
%Observe that log-tropical weight functions may grow arbitrarily rapidly;
%however, a log-tropical weight function may grow
%slower than any given rapid weight function.
%
%We say that a weight function $\we: [0, R) \to (0, +\infty)$ is \textsl{approximable from below
%by monomials} (in brief, $\we\in\dom_{\mathrm{mon}}$) if
\end{definition}

While the properties $\we\in\dom_{\log}$ and $\we\in\dom_{\trop}$
are equivalent for $\dom=\Dbb$ (see, e.g., Theorem~\ref{t_disk}),
this is not the case for $\dom=\Cbb$ (see Example~\ref{exm_expl}).

\begin{remark}
Each property in Definitions~\ref{def_n_app}--\ref{d_trop} holds up to equivalence.
%relation.
\end{remark}

In the present paper, we obtain the following result for $\dom=\Cbb$.

\begin{theorem}\label{t_abstract}
Let $\we: [0, +\infty) \to (0, +\infty)$ be a %rapid
weight function.
Then the following properties are equivalent:
\begin{itemize}
   \item the radial weight $\we$ on $\Cbb$ is approximable by a holomorphic map $(\we\in\Cbb_{\app});$
  \item $\we$ is essential $(\we\in\Cbb_{\mathrm{ess}});$
  \item $\we$ is approximable by the maximum of a holomorphic function modulus $(\we\in\Cbb_{\max});$
  \item $\we$ is approximable by power series with positive coefficients $(\we\in\Cbb_{\pspc});$
  \item $\we$ is equivalent to a log-tropical weight function $ (\we\in\Cbb_{\trop}).$
\end{itemize}
\end{theorem}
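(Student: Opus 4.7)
The plan is to establish the five-fold equivalence via a cycle of implications $\Cbb_{\pspc}\Rightarrow\Cbb_{\max}\Rightarrow\Cbb_{\mathrm{ess}}\Rightarrow\Cbb_{\trop}\Rightarrow\Cbb_{\pspc}$ together with the side connections $\Cbb_{\pspc}\Rightarrow\Cbb_{\app}$ and $\Cbb_{\app}\Rightarrow\Cbb_{\mathrm{ess}}$. Three arrows are routine. First, for $\Cbb_{\pspc}\Rightarrow\Cbb_{\max}$, if $\we(t)\asymp\sum_k b_k t^k$ with $b_k\ge 0$, then $f(z)=\sum_k b_k z^k$ has nonnegative Taylor coefficients, so its maximum modulus $M_f(t)=f(t)\asymp\we(t)$. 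Second, for $\Cbb_{\max}\Rightarrow\Cbb_{\mathrm{ess}}$, with $C=\|f/\we\|_\infty$, the function $f/C$ is admissible in the definition of $\widetilde{\we}$, and radiality of $\widetilde{\we}$ gives $\widetilde{\we}(t)\ge M_f(t)/C\asymp\we(t)$, yielding $\widetilde{\we}\asymp\we$ since $\widetilde{\we}\le\we$ is automatic. Third, for $\Cbb_{\app}\Rightarrow\Cbb_{\mathrm{ess}}$, using Remark~\ref{r_mod} we have $\sum_{j=1}^n|f_j|\asymp\we$ with each $|f_j|\le C\we$; hence $\widetilde{\we}\ge|f_j|/C$ for each $j$, and at every $z$ some $|f_j(z)|\ge(c/n)\we(|z|)$, forcing $\widetilde{\we}\gtrsim\we$.

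The three substantial implications carry the main content. For $\Cbb_{\mathrm{ess}}\Rightarrow\Cbb_{\trop}$, set $a_k=\inf_{t>0}\we(t)/t^k$ and $v(t)=\sup_k a_k t^k$, which is log-tropical by construction with active set $E=\{k:a_k>0\}$. Each monomial $a_k z^k$ is admissible in the definition of $\widetilde{\we}$, so $\widetilde{\we}\ge v$, and essentiality gives $v\le\widetilde{\we}\asymp\we$; the key remaining task is the upper bound $\widetilde{\we}(t)\le Cv(t)$. For this, any admissible $f=\sum_k c_k z^k$ satisfies $|c_k|\le a_k$ by Cauchy's inequality applied at the optimal radius; a coefficient-concentration argument invoking Hadamard's three-circles log-convexity of $M_f$ together with essentiality shows that $M_f(t)$ is dominated by a single maximal monomial, giving $M_f(t)\le C\sup_k a_k t^k$ uniformly in $f$, and hence $\widetilde{\we}\le Cv$. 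For $\Cbb_{\trop}\Rightarrow\Cbb_{\pspc}$, given $\we(t)\asymp\sup_k a_k t^k$, I exploit the piecewise-linear structure of the tropical power series: choose an appropriately arranged index set with scaling coefficients $b_k\ge 0$ so that $\sum_k b_k t^k$ is dominated by a single term at each scale, in the spirit of the lacunary-vs-full-sum equivalence, producing $\sum\asymp\sup\asymp\we$. Finally, for $\Cbb_{\pspc}\Rightarrow\Cbb_{\app}$, starting from $\we\asymp\sum_k b_k t^k$, I combine the function $g(z)=\sum_k b_k z^k$ with a bounded number of rotated or auxiliary entire functions whose combined moduli reproduce the radial growth of $\we$; the positivity of the coefficients makes the angular behavior tractable, and the companions are chosen so as to smooth out the angular variation of $|g|$ on circles.

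The main obstacle is expected to be the constructive implication $\Cbb_{\pspc}\Rightarrow\Cbb_{\app}$: one must produce finitely many entire functions whose Euclidean modulus matches $\we(|z|)$ pointwise on all of $\Cbb$, despite the unavoidable angular oscillations of any single entire function. The implication $\Cbb_{\mathrm{ess}}\Rightarrow\Cbb_{\trop}$ is also delicate, as it requires one to show that the essential envelope of $\we$ is comparable to the tighter log-tropical hull $\sup_k a_k t^k$ rather than the a priori larger series $\sum_k a_k t^k$ that naturally appears from the Cauchy bounds alone; this distinction is precisely what is responsible for the failure of the disk analog (Theorem~\ref{t_blms}) to carry over directly to $\Cbb$, and thus must be confronted head-on in the rapid-weight regime.
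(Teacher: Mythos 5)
Your skeleton of implications is essentially the paper's (easy arrows $\Cbb_{\pspc}\Rightarrow\Cbb_{\max}\Rightarrow\Cbb_{\mathrm{ess}}$ and $\Cbb_{\app}\Rightarrow\Cbb_{\mathrm{ess}}$ are fine), but the two load-bearing arrows are only named, not proved, and the mechanisms you sketch for them would not work as stated. The most serious gap is the arrow into $\Cbb_{\app}$. Your plan is to take $g(z)=\sum_k b_k z^k$ with $b_k\ge 0$ and add ``rotated or auxiliary'' companions to smooth out the angular variation of $|g|$. Rotations cannot do this: for $\we(t)=e^t$ and $g(z)=e^z$ one has $|g(e^{i\theta_j}z)|=e^{t\cos(\theta_j+\arg z)}$, so at a point whose argument stays at distance $\pi/n$ from all the $-\theta_j$ every one of $n$ rotates is at most $e^{t\cos(\pi/n)}=o(e^t)$; no finite family of rotates of $g$ has sum of moduli equivalent to $e^{|z|}$, even though $e^t\in\Cbb_{\trop}$. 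Positivity of the coefficients gives an upper bound on circles but no lower bound, and the lower bound is exactly the content of the theorem. The paper's route (Section~\ref{s_induction}) is $\Cbb_{\trop}\Rightarrow\Cbb_{\app}$: one first \emph{thins} the tropical series (the induction of subsection~\ref{ss_basind}, with a gap parameter $h\ge 4$, giving Lemmas~\ref{l_real_picture} and \ref{l_real_segment}), so that away from the currently maximal monomial the retained monomials decay geometrically, and then splits the retained monomials into three lacunary series $\GG_1,\GG_2,\GG_3$, one of which is dominated at each radius by a single term and hence bounded below on the whole circle. None of this selection/splitting mechanism appears in your proposal; you correctly flag it as ``the main obstacle,'' but flagging it is not a proof, and with it missing no implication into $\Cbb_{\app}$ is established, so the equivalence collapses.

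The second gap is in $\Cbb_{\mathrm{ess}}\Rightarrow\Cbb_{\trop}$. You reduce it to the right inequality, namely $M_f(t)\le C\,\sup_k a_k t^k$ uniformly over admissible $f$ (equivalently $\widetilde{\we}\le C P_\we$ in the paper's notation, Lemma~\ref{l_compute}), but your justification --- ``Hadamard three-circles log-convexity of $M_f$ together with essentiality'' --- cannot yield it. Log-convexity alone is compatible with an unbounded ratio between a function and its monomial minorant envelope (e.g.\ $e^{(\log t)^{3/2}}$, which is log-convex but not in $\Cbb_{\trop}$ by Theorem~\ref{t_explicit}), and essentiality of $\we$ says nothing about an individual admissible $f$; what is really needed is the entire-function input that $M_f(t)\le 3P_{M_f}(t)$, which the paper imports as Lemma~III of Erd\H{o}s--K\"ov\'ari \cite{EK56}. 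You yourself point out the sum-versus-sup issue arising from the Cauchy estimates, but you do not resolve it. By contrast, your sketch of $\Cbb_{\trop}\Rightarrow\Cbb_{\pspc}$ (retain a sparse set of monomials so that the sum is dominated by the maximal term) is the right idea and is exactly Schmid's theorem \cite{Sch95}, which the paper cites; with a precise thinning argument that step would be acceptable, but as written it too remains a sketch of the same construction you would need for the $\Cbb_{\app}$ step.
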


\begin{remark}\label{r_T2}
Hadamard's three-circles theorem and related arguments guarantee that
each property listed in Theorem~\ref{t_abstract} implies $\we\in\Cbb_{\log}$, that is, $\we$ is equivalent to a
log-convex weight function.
\end{remark}

\begin{remark}\label{r_nonrapid_2}
Theorem~\ref{t_abstract} essentially simplifies
when $\we$ is log-convex and non-rapid, that is,
\[
\lim_{t\to\infty} {t^{-k}}\we(t) <\infty\quad\textrm{for some\ } k\in\Nbb.
\]
Clearly, a non-rapid log-convex weight $\we(t)$ is equivalent at $+\infty$ to $t^\alpha$ with $\alpha>0$.
Thus, the properties in question hold for $\we(t)$
if and only if $\we(t)\asymp 1+t^m$, $t\ge 0$, for certain $m\in\Nbb$.
Also, the property $\we\in\Cbb_{\trop}$ simplifies in this case:
the logarithmic transform $\FF_{\we}$ is equivalent to a rather specific tropical power series;
namely, $\FF_{\we}$ is equivalent to a tropical polynomial.
%So, without loss of generality, in the proof of Theorem~\ref{t_abstract} we consider only rapid weights.
\end{remark}

\subsection{Explicit characterizations for regular rapid weight functions}
The equivalent conditions listed in Theorem~\ref{t_abstract} are rather abstract.
A characterization of the property $\we\in\Cbb_{\pspc}$ given by U.~Schmid \cite{Sch98}
in terms of the logarithmic transform $\FF_\we$
is also quite technical.
We consider the property $\we\in\Cbb_{\trop}$
to discuss related explicit conditions.
%assuming that $\we$ is regular in an appropriate sense.
In view of Remark~\ref{r_T2},
%given a weight function $\we$ with a property from
%Theorem 2, without loss of generality we may replace $\we$ by an equivalent log-convex
%weight function which is sufficiently regular, say, ${\mathcal C}^2$-smooth.
we may suppose, without loss of generality, that $\we$ is
${\mathcal C}^2$-smooth and log-convex, that is, $\FF_\we$ is convex.
Also, observe that the properties under consideration depend only on the behavior
of $\FF_\we$ at $+\infty$.

\begin{theorem}\label{t_explicit}
Let $\we: [0, +\infty) \to (0, +\infty)$ be a rapid weight function.
Assume that $\we$ is log-convex and ${\mathcal C}^2$-smooth.
%the second derivative $\FF_\we^{\prime\prime}(x)$, $x\ge A$, exists for some $A\in\Rbb$.
\begin{itemize}
  \item[(i)] If $\liminf_{x\to +\infty}\FF_\we^{\prime\prime}(x)>0$, then $\we\in\Cbb_{\trop}$.
  \item[(ii)] If $\limsup_{x\to +\infty}\FF_\we^{\prime\prime}(x)=0$, then $\we\notin\Cbb_{\trop}$.
  \end{itemize}
\end{theorem}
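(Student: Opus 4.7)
The plan for part (i) is to build a log-tropical weight equivalent to $\we$ from the tangent lines to $\FF_\we$ at points of integer slope. Choose $c>0$ and $X_0$ with $\FF_\we''(x)\ge c$ for $x\ge X_0$. Since $\we$ is rapid, $\FF_\we'(x)\to+\infty$, so there is a positive integer $j_0$ such that for every integer $j\ge j_0$ the equation $\FF_\we'(x_j)=j$ has a solution $x_j\ge X_0$. Put $b_j=\FF_\we(x_j)-jx_j$ and set
\[
\Psi(x)=\max\left\{\log \we(0),\ \sup_{j\ge j_0}(b_j+jx)\right\},
\]
a tropical power series with index set $E=\{0\}\cup\{j_0,j_0+1,\dots\}\subset\Zbb_+$.

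First I would verify $\Psi\le\FF_\we$ on $\Rbb$: each tangent line $b_j+jx$ lies below $\FF_\we$ by convexity, and $\log \we(0)\le\FF_\we(x)$ since $\we$ is non-decreasing; in particular the defining supremum is finite. The key estimate is $\FF_\we(x)-\Psi(x)\le 1/c$ for $x\ge x_{j_0}$: for such $x$ there is $j\ge j_0$ with $x\in[x_j,x_{j+1}]$, and
\[
\FF_\we(x)-(b_j+jx)=\int_{x_j}^{x}(\FF_\we'(t)-j)\,dt\le x_{j+1}-x_j\le 1/c,
\]
where the last inequality uses $\int_{x_j}^{x_{j+1}}\FF_\we''=1$ and $\FF_\we''\ge c$. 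For $x<x_{j_0}$, both $\FF_\we(x)$ and $\Psi(x)$ lie in the bounded interval $[\log \we(0),\FF_\we(x_{j_0})]$, so $|\FF_\we-\Psi|$ is bounded there as well, yielding $\we\in\Cbb_{\trop}$.

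For part (ii), suppose towards a contradiction that there exist $E\subseteq\Zbb_+$, coefficients $(b_j)_{j\in E}$ and $C>0$ with $|\FF_\we(x)-\Psi(x)|\le C$ for $\Psi(x)=\sup_{j\in E}(b_j+jx)$. Then $\Psi$ is convex and piecewise linear with non-decreasing integer slopes. From $\FF_\we''\ge 0$ and $\limsup_{x\to+\infty}\FF_\we''(x)=0$ one has $\FF_\we''(x)\to 0$. Fix $\epsilon<1/(32C)$ and $X_\epsilon$ with $\FF_\we''(x)<\epsilon$ for $x>X_\epsilon$. For every sufficiently large integer $j$, the closed interval
\[
[a_j,c_j]=\{x\in\Rbb:\FF_\we'(x)\in[j+\tfrac14,j+\tfrac34]\}
\]
lies in $(X_\epsilon,+\infty)$ and has length $L=c_j-a_j\ge 1/(2\epsilon)>16C$. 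Let $b\in[a_j,c_j]$ be the point where the slope of $\Psi$ transitions from values $\le j$ to values $\ge j+1$ (allowing $b=a_j$ or $b=c_j$). On $[a_j,b]$ the slope of $\Psi$ is at most $j$ while $\FF_\we'\ge j+1/4$, so that $\FF_\we(b)-\FF_\we(a_j)-(\Psi(b)-\Psi(a_j))\ge (b-a_j)/4$, which combined with $|\FF_\we-\Psi|\le C$ forces $b-a_j\le 8C$. Symmetrically $c_j-b\le 8C$, giving $L\le 16C$, a contradiction.

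The main obstacle lies in part (ii), which requires simultaneously exploiting the convexity of $\Psi$ (to order its integer slopes monotonically, so that values $\le j$ appear before values $\ge j+1$ and $\FF_\we-\Psi$ is essentially monotone on each side of the transition $b$) and the integrality of those slopes (so that $\FF_\we'$, confined to $[j+1/4,j+3/4]$, is separated from every slope of $\Psi$ by at least $1/4$). Together these ingredients convert the vanishing-curvature hypothesis $\FF_\we''\to 0$ into a quantitative obstruction to approximation by any tropical power series. In part (i) the main subtlety is instead to ensure that $\Psi$ behaves well for small $x$, which is handled by adjoining the constant term $b_0=\log \we(0)$.
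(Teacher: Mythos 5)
Your argument is correct. Part (i) is essentially the paper's proof: both take the tangent lines to $\FF_\we$ of integer slope, add the constant line $\log\we(0)$, and use $\FF_\we''\ge c$ on a half-line to bound the gap between $\FF_\we$ and the resulting tropical series by $1/c$ (the paper phrases this as $h_n\le 1/\alpha$ for the gap at the intersection point of consecutive tangents, which is the same computation as your bound on $[x_j,x_{j+1}]$). In part (ii) the underlying estimate is the same --- when $\FF_\we''<\er$ the set where $\FF_\we'$ lies strictly between consecutive integers has length of order $1/\er$, and over such a stretch $\FF_\we$ pulls away from any line of integer slope --- but the organization differs. The paper shows $\FF_\we(b_n)-L_n(b_n)\ge 1/(8\er)$ and $\FF_\we(b_n)-L_{n+1}(b_n)\ge 1/(8\er)$ at the point with $\FF_\we'(b_n)=n+\tfrac12$, concludes $h_n\ge 1/(8\er)$, and then invokes the criterion, stated there only as an observation, that $\we\in\Cbb_{\trop}$ if and only if $\sup_n h_n<\infty$; you instead derive a contradiction directly from an arbitrary tropical approximant $\Psi$ with $|\FF_\we-\Psi|\le C$, exploiting convexity and integrality of its slopes on the interval where $\FF_\we'\in[j+\tfrac14,j+\tfrac34]$. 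This buys a bit of completeness: you never need the ``only if'' half of the paper's observation (that approximability by \emph{some} tropical series forces the canonical tangent-line series to work within a constant), which the paper leaves to the reader. The one step you assert without justification is that a tropical power series is locally a maximum of finitely many of its lines, hence piecewise linear with integer, non-decreasing slopes; this is true (finiteness of the supremum at any point $x'>x$ forces all lines of large slope to lie far below $\Psi$ on a neighbourhood of $x$), and deserves a sentence, but it is not a gap in substance.
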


\begin{remark}
Under assumptions of Theorem~\ref{t_explicit}, we have $\FF_\we \in {\mathcal C}^2(\Rbb)$ and
$\FF_\we^{\prime\prime}(x)\ge 0$ for all $x\in\Rbb$.
So, if $\lim_{x\to +\infty}\FF_\we^{\prime\prime}(x)$ exists (finite or infinite),
then  either (i) or (ii) applies.
Observe that Theorem~\ref{t_explicit}(ii) is not applicable to non-rapid weight functions.
Also, log-tropical
weight functions may grow arbitrarily rapidly; however, a rapid log-tropical
weight function may grow slower than any given rapid weight function.
%\[
%\lim_{x\to +\infty}\FF_\we^{\prime\prime}(x)>0 \quad\textrm{or}\quad
%\lim_{x\to +\infty}\FF_\we^{\prime\prime}(x)=0.
%\]
\end{remark}

Theorem~\ref{t_explicit} provides explicit illustrations for Theorem~\ref{t_abstract}.
\begin{example}\label{exm_expl}
Let $\alpha > 1$ and let $w_\alpha$ be a weight function such that $\we_\alpha(t) = e^{(\log t)^\alpha}$, $t> e$.
Then $w_\alpha$, $\alpha\ge 2$, has the equivalent properties listed in
Theorem~\ref{t_abstract}, and $w_\alpha$, $1<\alpha<2$, does not have the properties under consideration.
Indeed, we have $ \Phi_{w_\alpha }(x) = x^\alpha, x>1$.
Hence, Theorem~\ref{t_explicit} applies.
In particular, for $1<\alpha<2$, $w_\alpha\in\Cbb_{\log}$ but $w_\alpha\notin\Cbb_{\trop}$.
\end{example}

\subsection{Organization of the paper}
In Section~\ref{s_abs_prf}, we prove an abridged Theorem~\ref{t_abstract} without
property $\we\in\Cbb_{\app}$;
also, we obtain an analog of Theorem~\ref{t_abstract} for $\dom=\Dbb$.
The proof of Theorem~\ref{t_abstract} is finished in Section~\ref{s_induction}, where
the key technical implication $\we\in\Cbb_{\trop} \Rightarrow \we\in\Cbb_{\app}$ is obtained.
Section~\ref{s_explicit} contains a proof of Theorem~\ref{t_explicit}.
Further results are presented in Section~\ref{s_further}.
In particular, Theorem~\ref{t_abstract_d} extends Theorem~\ref{t_abstract} to $\dom=\cd$, $d\ge 1$;
also, we consider approximation by harmonic maps.
The final Section~\ref{s_applic} contains applications with an emphasis on approximation
by proper holomorphic immersions and embeddings.

\smallskip
Main results of the present paper were announced in \cite{ADcras}.

\subsection*{Acknowledgements}
The authors are grateful to Jos\'e Bonet and Jari Taskinen
for useful discussions.

\section{Proof of Theorem~\ref{t_abstract}: known and basic implications}\label{s_abs_prf}

\subsection{An abridged Theorem~\ref{t_abstract}}
Given a weight function $\we: [0,+\infty) \to (0,+\infty)$,
an analog of property $\we\in\Cbb_{\trop}$ was introduced by
P.~Erd{\"o}s and T.~K{\"o}v{\'a}ri \cite{EK56} and
U.\,Schmid \cite{Sch95}, \cite{Sch98} to investigate approximations by power series with positive coefficients.
Namely, put
\[
P_\we(t) = \max\left\{\frac{t^k}{u_k}:\ k=0,1,\dots  \right\},
\]
where
\[
u_k = \sup\left\{\frac{t^k}{\we(t)}:\ 0\le t< +\infty \right\}, \quad k=0,1,\dots.
\]
In other words, one considers the pointwise maximum of the monomials $y_k(t) = a_k t^k$
such that $y_k(t) \le \we(t)$ and $y_k(t)$ reaches $\we(t)$ from below.
Clearly, $P_\we(t) \le \we(t)$.
So, the reverse inequality is of interest.

\begin{definition}\label{d_mon}
We say that a weight function $\we: [0, +\infty) \to (0, +\infty)$ is \textsl{approximable from below
by monomials} (in brief, $\we\in\dom_{\mathrm{mon}}$) if
\[
\we(t) \le C P_\we(t), \quad 0\le t< +\infty,
%\eqno{(\dom_{\mathrm{mon}})}
\]
for a constant $C > 1$.
\end{definition}

A direct inspection shows that the property $\we\in\Cbb_{\mathrm{mon}}$
is an equivalent reformulation of $\we\in\Cbb_{\trop}$.
So, %in the present section,
we freely exchange these two properties.

The following result shows that the computation of the associated radial weight $\widetilde{\we}$ on $\Cbb$
reduces to that of $P_\we$, up to a multiplicative constant.

\begin{lemma}\label{l_compute}
Let $\we: [0, +\infty) \to (0, +\infty)$ be a rapid weight function.
Then $6 P_\we(t) \ge \widetilde{\we}(t) \ge P_\we(t)$, $t\ge 0$.
\end{lemma}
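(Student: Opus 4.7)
The proof splits into the two inequalities.

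For the lower bound $\widetilde{\we}(t) \ge P_\we(t)$, I would use monomial test functions. For each integer $k \ge 0$, the entire function $f_k(z) = z^k/u_k$ satisfies $|f_k(z)| = |z|^k/u_k \le \we(|z|)$ by the very definition of $u_k$. Hence $f_k$ is admissible in the supremum defining $\widetilde{\we}$, and $\widetilde{\we}(t) \ge |f_k(t)| = t^k/u_k$; taking the supremum over $k$ yields $\widetilde{\we}(t) \ge P_\we(t)$.

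For the upper bound $\widetilde{\we}(t) \le 6 P_\we(t)$, take any $f \in \hol(\Cbb)$ with $|f| \le \we$, and write $f(z) = \sum_{k \ge 0} c_k z^k$. Cauchy's inequalities give $|c_k| r^k \le \we(r)$ for every $r > 0$, hence $|c_k| \le 1/u_k$. The auxiliary sequence $T_k := u_k/u_{k-1}$ is non-decreasing (equivalent to log-convexity of $k \mapsto \log u_k$) and, by the rapidity of $\we$, tends to $+\infty$. Fix $t > 0$ and let $k_0 = k_0(t)$ satisfy $P_\we(t) = t^{k_0}/u_{k_0}$, i.e.\ $t \in [T_{k_0}, T_{k_0+1}]$. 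Define $k_+ = \min\{k : T_k > 2t\}$ (finite by rapidity) and $k_- = \max\{k : T_{k+1} < t/2\}$. For $k \ge k_+$, iterating $u_k > 2t\, u_{k-1}$ yields $t^k/u_k \le 2^{-(k - k_+ + 1)} P_\we(t)$, so $\sum_{k \ge k_+} t^k/u_k \le P_\we(t)$; symmetrically $\sum_{k \le k_-} t^k/u_k \le 2 P_\we(t)$. The two geometric tails thus contribute at most $3 P_\we(t)$ in total.

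The principal obstacle is the ``middle'' range $k_- < k < k_+$, where $T_k \in [t/2, 2t]$. Its cardinality is not \emph{a priori} bounded by an absolute constant; for $\we(t) = e^{t^2}$ it is of order $t^2$, and the naive triangle-inequality bound $\sum_{k\ \text{middle}} |c_k| t^k \le (\#\,\text{middle}) \cdot P_\we(t)$ fails to yield a uniform constant. So the proof cannot proceed from $|f(z)| \le \sum_k |c_k| |z|^k$ alone, and must exploit the global constraint $|f| \le \we$ more subtly. A natural route is to apply Cauchy's integral formula to $f$ on a contour $|w| = R$ lying in the same cell of $(T_k)$ as $t$, so that $R^{k_0}/u_{k_0}$ is comparable to $P_\we(R)$ up to an absolute factor, and to invoke the log-convexity of $M_f$ (Hadamard's three-circles) to transfer the estimate from $|w| = R$ to $|z| = t$. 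Assembling this middle estimate with the two geometric tails produces the final factor $6$.
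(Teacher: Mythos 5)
Your lower bound is exactly the paper's: the monomials $t^k/u_k$ are admissible competitors in the definition of $\widetilde{\we}$, so $\widetilde{\we}\ge P_\we$. The upper bound, however, is not proved. You correctly isolate the obstruction — after Cauchy's estimates $|c_k|\le 1/u_k$, the ``middle'' exponents with $T_k\in[t/2,2t]$ can be as many as $\sim t^2$ (e.g.\ for $\we(t)=e^{t^2}$), so the termwise bound $\sum_k|c_k|t^k\le C\,P_\we(t)$ is simply false with an absolute constant — but the route you then sketch does not close this gap. For a rapid weight the ``cells'' $[T_{k},T_{k+1}]$ shrink (for $e^{t^2}$ consecutive $T_k$ differ by $O(1/t)$), so choosing a contour radius $R$ in the same cell as $t$ gives essentially no room, and Hadamard's three-circles theorem applied between two such nearby radii yields nothing beyond monotonicity of $M_f$; the concluding sentence ``assembling this middle estimate \ldots produces the final factor $6$'' asserts precisely the estimate that is missing rather than deriving it.

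The paper's proof is a short reduction to a genuinely nontrivial known result: Lemma~III of Erd\H{o}s--K\"ov\'ari \cite{EK56}, which states that the maximum modulus of an entire function satisfies $M_f(t)\le 3\,P_{M_f}(t)$ for all $t\ge 0$. Given $\za\in\Cbb$, one picks a near-extremal $f\in\hol(\Cbb)$ with $|f|\le\we$ and $\widetilde{\we}(\za)\le 2|f(\za)|$; then $|f|\le\we$ gives $P_{M_f}\le P_\we$, and the chain $\widetilde{\we}(|\za|)\le 2|f(\za)|\le 2M_f(|\za|)\le 6P_{M_f}(|\za|)\le 6P_\we(|\za|)$ explains the constant $6$ as $2\times 3$. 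The content of that lemma is exactly the statement your middle-range analysis would need — that maximum moduli of entire functions (unlike general log-convex functions, cf.\ the paper's Example~\ref{exm_expl}) are approximable from below by monomials with an absolute constant — and it cannot be obtained by the triangle inequality on the Taylor series; so either cite \cite{EK56} as the paper does, or supply a genuine proof of an inequality of the form $M_f(t)\le C\,P_{M_f}(t)$ before assembling the tails.
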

\begin{proof}
  First, the definitions of $\widetilde{\we}$ and $P_\we$ guarantee that $\widetilde{\we}(t)\ge P_\we(t)$, $t\ge 0$.

  Secondly, fix a point $\za\in\Cbb$.
By the definition of $\widetilde{\we}(\za)$, there exists a function
$f=f_\za \in\hol(\Cbb)$ such that $|f(z)|\le \we(z)$ for all $z\in\Cbb$
and $\widetilde{\we}(\za) \le 2 |f(\za)|$.
Lemma~III from \cite{EK56} guarantees that $M_f(t) \le 3 P_{M_f}(t)$, $t\ge 0$.
Since $|f(z)|\le \we(z)$ for all $z\in\Cbb$, we have $P_{M_f}(t) \le P_\we (t)$, $t\ge 0$.
Combining the above estimates, we obtain
\[
6 P_\we(|\za|) \ge 6 P_{M_f}(|\za|) \ge 2 |f(\za)| \ge \widetilde{\we}(|\za|).
\]
The point $\za\in\Cbb$ is arbitrary, therefore, $6 P_\we(t) \ge \widetilde{\we}(t)$ for all $t\ge 0$, as required.
\end{proof}

Theorem~\ref{t_abstract} without
property $\we\in\Cbb_{\app}$ is a corollary of Lemma~\ref{l_compute} and certain known results
obtained by different authors in the studies related to the properties $\we\in\Cbb_{\max}$,
$\we\in\Cbb_{\pspc}$ and $\we\in\Cbb_{\mathrm{mon}}$.
Using Lemma~\ref{l_compute}, we add the property $\we\in\Cbb_{\mathrm{ess}}$ to the list of equivalent conditions.

\begin{proof}[of abridged Theorem~\ref{t_abstract}]
It is proved in \cite{Sch95} that the property $\we\in\Cbb_{\mathrm{mon}}$
implies $\we\in\Cbb_{\pspc}$; see also \cite{Ki84} or \cite[Theorem~1]{Sch98}.

Clearly, $\we\in\Cbb_{\pspc}$ implies $\we\in\Cbb_{\max}$;
also, $\we\in\Cbb_{\mathrm{ess}}$ follows from $\we\in\Cbb_{\max}$.

Finally, Lemma~\ref{l_compute} guarantees that
$\we\in\Cbb_{\mathrm{ess}}$ implies $\we\in\Cbb_{\mathrm{mon}}$, that is, $\we\in\Cbb_{\trop}$.
\end{proof}

So, all properties listed in Theorem~\ref{t_abstract}, except $\we\in\Cbb_{\app}$, are equivalent.
It is easy to see that the property $\we\in\Cbb_{\app}$ implies $\we\in\Cbb_{\mathrm{ess}}$.
Hence, to finish the proof of Theorem~\ref{t_abstract},
it suffices to show that $\we\in\Cbb_{\trop}$ implies $\we\in\Cbb_{\app}$.
This implication is obtained in Section~\ref{s_induction}.

\subsection{An analog of Theorem~\ref{t_abstract} for $\dom=\Dbb$}
Theorem~\ref{t_blms} indicates that, for $\dom=\Dbb$, the properties mentioned
in Theorem~\ref{t_abstract} are directly related to the condition $\we\in\Dbb_{\log}$.
In fact, the following analog of Theorem~\ref{t_abstract}
for $\dom=\Dbb$ is a corollary of known results.

\begin{theorem}\label{t_disk}
Let $\we: [0, 1) \to (0, +\infty)$ be an arbitrary weight function.
Then the following properties are equivalent:
\begin{itemize}
  \item $\we\in\Dbb_{\app};$
  \item $\we\in\Dbb_{\mathrm{ess}};$
  \item $\we\in\Dbb_{\max};$
  \item $\we\in\Dbb_{\pspc};$
  \item $\we\in\Dbb_{\trop};$
  \item $\we\in\Dbb_{\log}.$
\end{itemize}
\end{theorem}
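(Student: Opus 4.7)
The plan is to promote Theorem~\ref{t_blms} to the full list of six properties by cycling through implications, in the same spirit as the proof of the abridged Theorem~\ref{t_abstract} in Section~\ref{s_abs_prf}. Since Theorem~\ref{t_blms} already gives $\we\in\Dbb_{\app}\Leftrightarrow \we\in\Dbb_{\log}$, it suffices to show that each of $\Dbb_{\mathrm{ess}}$, $\Dbb_{\max}$, $\Dbb_{\pspc}$, $\Dbb_{\trop}$ is equivalent to $\Dbb_{\log}$. The argument will also closely parallel the one invoked for $\dom=\Cbb$, except in one place where the distinction between the disk and the plane becomes essential.

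For the easy half I would verify, in order, the following routine implications.
(a) $\Dbb_{\app}\Rightarrow \Dbb_{\mathrm{ess}}$: by Remark~\ref{r_mod} write $\sum_{j=1}^n|f_j|\asymp \we$ with $|f_j|\le C\we$; since $|f_j/C|\le\we$, the definition of the associated weight forces $|f_j|\le C\widetilde{\we}$, and summing gives $\we\asymp\widetilde{\we}$.
(b) $\Dbb_{\mathrm{ess}}\Rightarrow \Dbb_{\log}$: because $\widetilde{\we}$ is always log-convex.
(c) $\Dbb_{\pspc}\Rightarrow \Dbb_{\max}$: take $f(z)=\sum a_k z^k$ and observe that $M_f(t)=f(t)=\sum a_k t^k$.
(d) $\Dbb_{\max}\Rightarrow \Dbb_{\log}$: by Hadamard's three-circles theorem.
(e) $\Dbb_{\trop}\Rightarrow \Dbb_{\log}$: because tropical power series are convex.
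Thus the easy implications give $\Dbb_{\app}\Rightarrow\Dbb_{\mathrm{ess}}\Rightarrow\Dbb_{\log}$, $\Dbb_{\pspc}\Rightarrow\Dbb_{\max}\Rightarrow\Dbb_{\log}$, and $\Dbb_{\trop}\Rightarrow\Dbb_{\log}$.

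What remains, and what I expect to be the main obstacle, is the reverse implication $\we\in\Dbb_{\log}\Rightarrow \we\in\Dbb_{\pspc}\cap\Dbb_{\trop}$; once this is established, the cycle closes via Theorem~\ref{t_blms}. This is exactly the point at which the disk differs essentially from $\Cbb$ (compare Example~\ref{exm_expl}, where log-convexity on $\Cbb$ does not yield $\Cbb_{\trop}$). The disk-specific gain is that if $\we$ is a log-convex weight on $[0,1)$ with $\we(t)\to\infty$ as $t\to 1^-$, then $\FF_\we$ is convex and non-decreasing on $(-\infty,0)$ and tends to $+\infty$ at $0^-$, so the right derivative $\FF_\we'$ sweeps through all sufficiently large real values. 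The plan is then to invoke the classical Legendre-transform / Kiselman--Schmid argument (\cite{Ki84,Sch95,Sch98}): setting
\[
u_k=\sup_{0\le t<1}\frac{t^k}{\we(t)},\qquad a_k=\frac{1}{u_k},
\]
density of slopes ensures that
\[
\we(t)\asymp \max_{k\ge 0}a_k t^k\asymp \sum_{k\ge 0}a_k t^k,\qquad 0\le t<1,
\]
which simultaneously yields $\we\in\Dbb_{\trop}$ and $\we\in\Dbb_{\pspc}$. Combined with the easy implications and Theorem~\ref{t_blms}, all six properties become equivalent.
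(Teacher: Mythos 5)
Your overall architecture (every property implies $\we\in\Dbb_{\log}$, and $\we\in\Dbb_{\log}$ implies everything back, with Theorem~\ref{t_blms} supplying $\Dbb_{\app}$ and hence $\Dbb_{\mathrm{ess}}$) is the same as the paper's, and your easy implications (a)--(e) are fine. The gap is in your final display. The first equivalence, $\we(t)\asymp\max_{k\ge0}a_kt^k$ with $a_k=1/u_k$ (i.e. $\we\in\Dbb_{\mathrm{mon}}$, equivalently $\we\in\Dbb_{\trop}$), is indeed true for log-convex weights on $[0,1)$, although ``density of slopes'' is only a gesture: the actual reason is that the touch points of the supporting lines of integer slope all lie in a fixed interval $(x_1,0)$ of the variable $x=\log t$, so the gaps between consecutive touch points, and hence the additive error of the integer-slope tropical majorant, stay bounded (the paper simply cites \cite{BDL99} for this). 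But the second equivalence, $\max_{k}a_kt^k\asymp\sum_{k}a_kt^k$ with the extremal coefficients $a_k=1/u_k$, is false in general, and this is exactly where $\we\in\Dbb_{\pspc}$ was supposed to come from. Take $\we(t)=1/(1-t)$: then $u_k=k^k/(k+1)^{k+1}\asymp 1/(k+1)$, so $a_k\asymp k+1$, and
\[
\max_{k\ge0}a_kt^k\asymp\frac{1}{1-t}=\we(t),\qquad\text{whereas}\qquad\sum_{k\ge0}a_kt^k\asymp\frac{1}{(1-t)^2},
\]
so the full series built from the extremal monomials overshoots $\we$ by an unbounded factor.

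Passing from the tropical maximum to an actual power series with positive coefficients is the nontrivial content of $\Dbb_{\mathrm{mon}}\Rightarrow\Dbb_{\pspc}$ and requires thinning the set of exponents so that the discarded terms are geometrically dominated by the selected one near each radius --- this is precisely the role of the lacunary selection of subsection~\ref{ss_basind} and of estimate \eqref{e_iiiLem} in the $\Cbb$-case, and on the disk it is the content of \cite[Lemma~2.2]{Dou14}, which the paper invokes (the paper likewise cites \cite{BDL99} for $\Dbb_{\log}\Rightarrow\Dbb_{\mathrm{mon}}$ rather than reproving it). So either quote those results, as the paper does, or supply the thinning/lacunarity argument; as written, the step $\Dbb_{\log}\Rightarrow\Dbb_{\pspc}$ is not proved.
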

\begin{proof}
  Each property under consideration implies that $\we\in\Dbb_{\log}$.
So, to prove the reverse implications, assume that $\we\in\Dbb_{\log}$.

The property $\we\in\Dbb_{\app}$ holds by Theorem~\ref{t_blms}, hence,
$\we\in\Dbb_{\mathrm{ess}}$ also.
Next, it is proved  in \cite{BDL99} that $\we\in\Dbb_{\mathrm{mon}}$,
that is, $\we\in\Dbb_{\trop}$; see also \cite{AD15}.
  We have $\we\in\Dbb_{\pspc}$ by \cite[Lemma~2.2]{Dou14}.
  Finally, $\we\in\Dbb_{\pspc}$ trivially implies $\we\in\Dbb_{\max}$.
\end{proof}

\section{Proof of Theorem~\ref{t_abstract}:
$\we\in\Cbb_{\trop} \Rightarrow \we\in\Cbb_{\app}$}\label{s_induction}
\subsection{Setting}\label{ss_set}
We are given a weight function $\we\in\Cbb_{\trop}$.
Without loss of generality, we assume that $\we$ is log-tropical,
that is, the logarithmic transform $\FF_\we$ is a tropical power series
%\begin{equation}\label{e_trop_def}
\[
T(x) = \max_{m\in \Nbb} L_m(x), \quad -\infty \le x < +\infty,
\]
%\end{equation}
where $L_m(x)= B_m + N_m x$, $B_m\in\Rbb$, $N_m\in\Zbb_+$, $0=N_1< N_2<\dots$.
Initially, $T$ is defined as a supremum. However, if the supremum is finite at each point $x$, $-\infty \le x < +\infty$,
then the supremum realizes as a maximum. Also, without loss of generality,
we assume that each $L_m(x)$ is essential in the definition of $T(x)$, that is,
$T(x)= L_m(x)$ on a non-empty interval for every $m\in \Nbb$.

\subsection{Construction of a thinned tropical series}\label{ss_basind}
Fix a parameter $\hg>0$.
Let $x_{0} = -\infty$ and $\ell_1(x) = L_1(x) = B_1$, $-\infty \le x <+\infty$.

By induction, we construct a subsequence $\{\ell_{k}\}_{k\ge 1}$ of $\{L_{m}\}_{m\ge 1}$
and numbers $x_{k-1}< x_k^\prime \le x_k$, $k=1,2,\dots$.
So, assume that $k\in\Nbb$ and a linear function $\ell_k$ is selected, where $\ell_k = L_m$ for certain $m\ge k$.
Moreover, we are given a point $x_{k-1}$ such that $x_{k-1} < x_k^\prime$, where $x_k^\prime$ is defined as the $x$-coordinate of
the intersection point of $\ell_k = L_m$ and $L_{m+1}$.
Clearly, $T(x_k^\prime) = \ell_k(x_k^\prime)$.
Now, let $s\in\Nbb$ denote the largest number with the following property:

The intersection point of $\ell_k$ and $L_{m+s}$ is strictly above the graph of $T-h$.

Observe that $L_{m+1}$ trivially has the property in question, so the required $s\in\Nbb$ exists.
Put $\ell_{k+1} = L_{m+s}$ and denote by $x_{k}$ the $x$-coordinate of the
intersection point of $\ell_k$ and $\ell_{k+1}$.
So, we have $\xx_k \le x_k < \xx_{k+1}$ and, by the definition of $s\in\Nbb$, the intersection point
of $\ell_k$ and $L_{m+s+1}$ is below the graph of $T-h$.

The functions $\ell_k$ are linear, so we have $\ell_k(x)=b_k + n_k x$ with $b_k\in\Rbb$, $n_k\in\Zbb_+$,
$0=n_1<n_2<\dots$.
Also, put
$t_k = \exp(x_k)$, $k=0,1,\dots$. Clearly, the positive numbers
$t_k$ monotonically increase to $+\infty$ as $k\to \infty$.

Formally, the above construction works for any $\hg>0$.
In applications, we impose additional restrictions, say, $h\ge 4$.

\subsection{Auxiliary lemmas}
\begin{lemma}\label{l_real_picture}
Let the linear functions $\ell_{k}$ and the numbers $x_k$,
$k=1,2,\dots$,
be those constructed in subsection~\ref{ss_basind}.
Then
\begin{align}
%\ell_{k}(x) \ge \ell_{k+2}(x) +\hg \quad&\textrm{for all\ } x,\ x\le x_{k-1},\ k\ge 1;
%\label{e_picture_1}
%\\
\ell_{k-1}(x) \ge \ell_{k+2}(x) +\hg \quad&\textrm{for\ } x\le x_{k-1},\ k\ge 2;
\label{e_picture_1prime}
\\
%\ell_{k+1}(x) \ge \ell_{k-1}(x) +\hg \quad&\textrm{for all\ } x,\ x_{k+1}\le x,\ k\ge 1.
%\label{e_picture_2}
%\\
\ell_{k+2}(x) \ge \ell_{k-1}(x) +\hg \quad&\textrm{for\ } x_{k+1}\le x,\ k\ge 2.
\label{e_picture_2prime}
\end{align}
\end{lemma}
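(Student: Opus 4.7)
The plan is to fix $k\ge 2$ and reduce each inequality to a single endpoint value by monotonicity, then telescope the difference using the breakpoint identities and the formula $\ell_{j+1}-\ell_j=(n_{j+1}-n_j)(x-x_j)$, and finally invoke a one-step bound $(n_{j+1}-n_j)(x_{j+1}-x_j)\ge \hg$ that is extracted from the greedy construction.

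\emph{Reduction to an endpoint.} Since $n_{k-1}<n_{k+2}$, the linear function $x\mapsto \ell_{k-1}(x)-\ell_{k+2}(x)$ is strictly decreasing. Hence (\ref{e_picture_1prime}) is equivalent to its value at $x=x_{k-1}$ being $\ge\hg$, and (\ref{e_picture_2prime}) is equivalent to its value at $x=x_{k+1}$ being $\ge\hg$.

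\emph{Telescoping.} Using the breakpoint identities $\ell_{k-1}(x_{k-1})=\ell_k(x_{k-1})$, $\ell_{k+2}(x_{k+1})=\ell_{k+1}(x_{k+1})$ and $\ell_{j+1}(x)-\ell_j(x)=(n_{j+1}-n_j)(x-x_j)$,
\begin{align*}
\ell_{k+2}(x_{k+1})-\ell_{k-1}(x_{k+1}) &= (n_{k+1}-n_k)(x_{k+1}-x_k)+(n_k-n_{k-1})(x_{k+1}-x_{k-1}),\\
\ell_{k-1}(x_{k-1})-\ell_{k+2}(x_{k-1}) &= (n_{k+1}-n_k)(x_k-x_{k-1})+(n_{k+2}-n_{k+1})(x_{k+1}-x_{k-1}).
\end{align*}
All four summands are nonnegative, so it will suffice to show that at least one summand in each line is $\ge\hg$.

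\emph{One-step bound.} The key ingredient is $(n_{j+1}-n_j)(x_{j+1}-x_j)\ge\hg$ for every $j\ge 1$. Write $\ell_j=L_{p_j}$ and $\ell_{j+1}=L_{p_{j+1}}$, and let $z$ denote the $x$-coordinate of the intersection of $\ell_j$ and $L_{p_{j+1}+1}$; by the maximality of the greedy step, $z\in(x_j,\xx_{j+1}]\subseteq(x_j,x_{j+1}]$ and $\ell_j(z)\le T(z)-\hg$. On the other hand, the envelope $E:=\max_i\ell_i$ satisfies $E\ge T-\hg$: on each active piece $[x_{i-1},x_i]$ the convex function $T-\ell_i$ is $<\hg$ at both endpoints (the consecutive intersections are placed strictly above $T-\hg$ by the construction) and hence throughout, by three-point convexity. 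Since $z\in[x_j,x_{j+1}]$, we have $E(z)=\ell_{j+1}(z)\ge T(z)-\hg$. Combining this with $\ell_j(z)\le T(z)-\hg$ and tracking which $L_q$ is the active line of $T$ at $z$ (its slope satisfying $N_q-n_j\ge 1$ since $q>p_j$) yields the claimed one-step bound.

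\emph{Conclusion.} Applied with $j=k$, the one-step bound gives $(n_{k+1}-n_k)(x_{k+1}-x_k)\ge\hg$, which combined with the nonnegative second summand in the first identity proves (\ref{e_picture_2prime}). Applied at $j=k+1$ it gives $(n_{k+2}-n_{k+1})(x_{k+2}-x_{k+1})\ge\hg$; since $x_{k+1}-x_{k-1}\ge x_{k+1}-x_k$, a short further comparison using the envelope picture (or a reapplication of the $j=k$ bound) upgrades this to the required lower bound on $(n_{k+2}-n_{k+1})(x_{k+1}-x_{k-1})$, and (\ref{e_picture_1prime}) follows.

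The main obstacle is the one-step bound itself: if the construction skips lines between $\ell_j$ and $\ell_{j+1}$, then the active $L_q$ of $T$ at $z$ need not be $\ell_{j+1}$, and the envelope inequality alone only yields the trivial $\ell_{j+1}(z)\ge \ell_j(z)$. The full factor of $\hg$ must then be recovered from the active-line estimate via the integer slope gap $N_q-n_j\ge 1$, which is the delicate point.
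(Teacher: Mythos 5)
Your proof funnels both inequalities through the one-step bound $(n_{j+1}-n_j)(x_{j+1}-x_j)\ge h$, and this creates two problems. First, you do not actually prove that bound: as you yourself note, when the greedy step skips lines the envelope inequality only yields $\ell_{j+1}(z)\ge\ell_j(z)$, and the proposed rescue via the integer slope gap $N_q-n_j\ge 1$ is not an argument. (The bound is in fact true, but the mechanism is monotonicity, not the size of the gap: since $T-\ell_j$ is non-decreasing to the right of the contact interval of $\ell_j$, the maximality inequality $\ell_j(z)\le T(z)-h$ propagates to the breakpoint $x'_{j+1}$ at the right end of the contact interval of $\ell_{j+1}$, where $T(x'_{j+1})=\ell_{j+1}(x'_{j+1})$, giving $\ell_{j+1}(x'_{j+1})-\ell_j(x'_{j+1})\ge h$, and then one moves right to $x_{j+1}\ge x'_{j+1}$.) Second, and more seriously, even granting the one-step bound your deduction of \eqref{e_picture_1prime} fails. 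By your second identity you must bound $(n_{k+1}-n_k)(x_k-x_{k-1})+(n_{k+2}-n_{k+1})(x_{k+1}-x_{k-1})$ below by $h$, and the one-step bounds at $j=k-1,k,k+1$ do not imply this: with slope gaps $n_k-n_{k-1}=100$, $n_{k+1}-n_k=10$, $n_{k+2}-n_{k+1}=1$ and $x_k-x_{k-1}=h/100$, $x_{k+1}-x_k=h/10$, $x_{k+2}-x_{k+1}=h$, all three one-step bounds hold, yet your sum equals $21h/100<h$. The closing sentence ("a short further comparison", "reapplication of the $j=k$ bound") cannot repair this, because the reapplied bound carries the slope gap $n_{k+1}-n_k$ rather than the needed $n_{k+2}-n_{k+1}$; ruling out such configurations is precisely what still has to be proved.

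The information you are not using is the two-step consequence of the maximality of $s$ in subsection~\ref{ss_basind}: the intersection point of $\ell_k$ with $L_{m+s+1}$, and hence the intersection point $(\xi_k,y_k)$ of $\ell_k$ with $\ell_{k+2}$, lies below the graph of $T-h$, i.e.\ $y_k\le T(\xi_k)-h$. This is strictly stronger than the one-step bound and is what drives \eqref{e_picture_1prime} in the paper: since all slopes of $T$ on $(-\infty,\xi_k]$ are at most $n_{k+2}$, the function $T-\ell_{k+2}$ is non-increasing there, so $\ell_k(x'_k)-\ell_{k+2}(x'_k)=T(x'_k)-\ell_{k+2}(x'_k)\ge T(\xi_k)-y_k\ge h$; then, because $\ell_k-\ell_{k+2}$ is decreasing and $\ell_{k-1}\ge\ell_k$ on $(-\infty,x_{k-1}]$, inequality \eqref{e_picture_1prime} follows, and \eqref{e_picture_2prime} is proved symmetrically. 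Your endpoint reduction and telescoping identities are correct as far as they go, but to complete the proof you must replace the one-step bound by this ``intersection of $\ell_k$ and $\ell_{k+2}$ lies below $T-h$'' property.
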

\begin{proof}
We verify property~\eqref{e_picture_1prime}.
The proof of \eqref{e_picture_2prime} is analogous, so we omit it.

Let $(\xi_k, y_k)$ denote the intersection point of $\ell_k$
and $\ell_{k+2}$.
As mentioned in subsection~\ref{ss_basind}, the definition of $s=s(k)\in\Nbb$
guarantees that the intersection point of $\ell_k$ and $L_{m+s+1}$ is below
the graph of $T-h$; hence, $(\xi_k, y_k)$ is also below the graph of $T-h$, that is,
\begin{equation}\label{e_picture_below}
y_k \le T(\xi_k) - h.
\end{equation}

Recall that $x_{k-1}< x_k^\prime \le x_{k}$ and $T(x_k^\prime) = \ell_k(x_k^\prime)$.
Also, observe that $x_{k} < \xi_k$.
For $x\le \xi_k$, the slopes of the linear functions defining $T(x)$ are smaller than
the slope of $\ell_{k+2}$, therefore
\[
\ell_k(\xx_{k}) - \ell_{k+2}(\xx_{k}) = T(\xx_k) - \ell_{k+2}(\xx_k) \ge T(\xi_k) - y_k \ge h
\]
by \eqref{e_picture_below}.

We have $x_{k-1}<\xx_{k}$
and the slope of $\ell_{k+2}$ is larger than that of $\ell_k$,
thus,
\[
\ell_k(x) - \ell_{k+2}(x)\ge \ell_k(\xx_{k}) - \ell_{k+2}(\xx_{k}) \ge h
\]
for $x \le x_{k-1}$.
To finish the proof of \eqref{e_picture_1prime}, it suffices to
observe that $\ell_{k-1}(x) \ge \ell_k(x)$ for $x\le x_{k-1}$.
\end{proof}

\begin{lemma}\label{l_real_segment}
Let the linear functions $\ell_{k}(x)= b_k + n_k x$ and the numbers $t_k= \exp(x_k)$,
$k=1,2,\dots$,
be those constructed in subsection~\ref{ss_basind}.
Put $a_k =\exp(b_k)$ and assume that $\hg\ge 4$.
Then, for $k=1,2,\dots$,
\begin{align}
%\item[(i)] %\label{e_p3}
a_k t^{n_k}
&\le \we(t), \quad t\in [0, +\infty);
\label{e_iLem}
\\
%\item[(ii)] %\label{e_p1}
e^{-h} \ff(t)
&\le a_k t^{n_k}, \quad t\in [t_{k-1}, t_k];
\label{e_iiLem}
\\
%\item[(iii)] %\label{e_p2}
\sum_{m\ge 1,\ |m-k|\ge 3} a_m t^{n_m}
&\le \frac{1}{2} a_k t^{n_k}, \quad t\in [t_{k-1}, t_k],
\label{e_iiiLem}
\end{align}
where $t_{0}=0$.
\end{lemma}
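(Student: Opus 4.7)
The plan is to convert all three bounds to additive comparisons on the logarithmic scale. Setting $x=\log t$ we have $a_k t^{n_k} = e^{\ell_k(x)}$ and, since we may assume without loss of generality that $\we$ is itself log-tropical (not merely equivalent to one), $\we(t) = e^{T(x)}$, where $T(x) = \max_m L_m(x)$ is the tropical series from Section~\ref{ss_set}. The estimates \eqref{e_iLem}, \eqref{e_iiLem}, \eqref{e_iiiLem} then become pointwise statements about the linear functions $\ell_k$ and $T$.

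Estimate \eqref{e_iLem} follows at once from $\ell_k = L_{m(k)} \le T$ on $\Rbb$, with the value at $t=0$ obtained by continuity. For \eqref{e_iiLem}, the claim rephrases as $T(x) \le \ell_k(x) + h$ on $[x_{k-1}, x_k]$. Since $T$ is convex and $\ell_k$ is linear, $T - \ell_k$ is convex, hence on this interval it does not exceed the larger of its two endpoint values. At both $x_{k-1}$ and $x_k$ the construction in Section~\ref{ss_basind} guarantees that the intersection point of the two consecutive selected lines lies strictly above $T - h$, which translates into $T - \ell_k < h$ at each endpoint and therefore throughout $[x_{k-1}, x_k]$.

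The heart of the matter is \eqref{e_iiiLem}. I would prove, by induction on $|m - k|$, the separation
\[
\ell_m(x) \le \ell_k(x) - \lfloor |m - k|/3 \rfloor\, h, \qquad x \in [x_{k-1}, x_k].
\]
The base cases $|m - k| \in \{0, 1, 2\}$ reduce to $\ell_m \le \ell_k$ on $[x_{k-1}, x_k]$ and follow from the slope ordering $n_1 < n_2 < \cdots$ together with the definition of the $x_j$'s as successive intersection abscissae (for $|m-k|=2$ one also invokes Lemma~\ref{l_real_picture} to control the relevant endpoint). For the inductive step with $m \ge k + 3$, applying Lemma~\ref{l_real_picture} at index $k^\prime = m - 2$ (using $x_{m - 3} \ge x_k$) yields $\ell_m(x) \le \ell_{m-3}(x) - h$ on $[x_{k-1}, x_k]$, after which the inductive hypothesis bounds $\ell_{m-3}$; the case $m \le k - 3$ is entirely symmetric via the second inequality of Lemma~\ref{l_real_picture}.

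Exponentiating and summing the separation bound, and noting that for each integer $q \ge 1$ at most three indices $m$ on each side of $k$ satisfy $\lfloor |m-k|/3 \rfloor = q$, gives
\[
\sum_{m \ge 1,\, |m-k| \ge 3} a_m t^{n_m} \le a_k t^{n_k} \cdot 6 \sum_{q \ge 1} e^{-qh} = a_k t^{n_k} \cdot \frac{6 e^{-h}}{1 - e^{-h}},
\]
which is strictly less than $\frac{1}{2} a_k t^{n_k}$ as soon as $h \ge 4$. The main obstacle is thus the careful bookkeeping in the induction producing the $\lfloor |m-k|/3 \rfloor$ factor and the verification that the threshold $h \ge 4$ imposed in the hypothesis is precisely what is needed to push the geometric tail below $\frac{1}{2}$; everything else flows directly from Lemma~\ref{l_real_picture} and convexity.
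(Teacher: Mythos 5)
Your proof is correct and follows essentially the same route as the paper: estimate \eqref{e_iLem} from $\ell_k\le T$, estimate \eqref{e_iiLem} from $T-\hg\le\ell_k$ on $[x_{k-1},x_k]$ (you supply the convexity/endpoint argument the paper leaves implicit), and estimate \eqref{e_iiiLem} from the separation $\ell_k-\ell_m\ge \hg\lfloor|m-k|/3\rfloor$ obtained from Lemma~\ref{l_real_picture}, followed by summing a geometric series with $\hg\ge4$. Your induction on $|m-k|$ is just a repackaging of the paper's telescoping sum over steps of three, so no substantive difference.
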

\begin{proof}
Let $k\ge 1$.
On the one hand, we have $\ell_k(x) \le T(x)= \FF_{\we}(x)$ for $-\infty \le x<+\infty$.
Hence, taking the exponentials, we obtain \eqref{e_iLem}
by the definitions of $\FF_{\we}(x)$ and $\ell_k(x) = b_k + n_k x$.
On the other hand, the inequality
$T(x)-\hg \le \ell_k(x)$, $x\in [x_{k-1}, x_k]$,
implies estimate \eqref{e_iiLem}.

Now, we prove \eqref{e_iiiLem}. %\label{e_p2}
First, fix a $k\ge 1$ and assume that
$m-k= 3, 6, 9,\dots$. We have
\[
\ell_{k}(x) - \ell_m(x)
=\sum_{j=1}^{(m-k)/3} [\ell_{k+3j-3}(x) -\ell_{k+3j}(x)].
\]
Property~\eqref{e_picture_1prime} guarantees that
$\ell_{k+3j-3}(x) - \ell_{k+3j}(x) \ge \hg$
for $1\le j\le (m-k)/3$ and $-\infty < x\le x_{k+3j-3}$,
hence, for all $x\le x_k$.
In sum,
\[
\ell_k(x) - \ell_m(x) \ge  \hg(m-k)/3
\quad\textrm{for\ } -\infty < x\le x_k.
\]
Since $\ell_{m+2}(x) \le \ell_{m+1}(x) \le \ell_{m}(x)$ for $x\le x_k$,
we also have
\[
\ell_k(x) - \ell_{m+\shift}(x) \ge  \hg(m-k)/3
\quad\textrm{for\ } -\infty < x\le x_k,\ \shift=0,1,2.
\]
Taking the exponentials and using the definitions of $\ell_{m+\shift}$ and $\ell_k$,
we obtain
\[
a_m t^{n_{m+\shift}} \le a_k t^{n_k} \frac{1}{e^{\hg(m-k)/3}} \quad\textrm{for\ } 0\le t\le t_k,\ \shift=0,1,2.
\]
Thus,
\begin{equation}\label{e_m_large}
\sum_{m\ge k+3} a_m t^{n_m}
\le 3 a_k t^{n_k}
\left({e^{-\hg}} + {e^{-2\hg}} + {e^{-3\hg}} + \dots\right)
 \quad\textrm{for\ } 0\le t\le t_k.
\end{equation}

Second, fix a $k\ge 4$.
Replacing \eqref{e_picture_1prime} by \eqref{e_picture_2prime}
and arguing as above, we obtain
%Namely, for $0> x\ge x_{k-1}$, we have
%\[
%\ell_{k}(x) - \ell_m(x)
%\ge \ell_{k-1}(x) - \ell_m(x)
%=\sum_{j=m}^{k-2} [\ell_{j+1}(x) -\ell_{j}(x)].
%\]
%For $m\le j \le k-2$, property~\eqref{e_picture_2} guarantees that
%$\ell_{j+1}(x) - \ell_{j}(x) \ge \hg$ for all $0> x\ge x_{k-1}$.
%In sum, we have
%\[
%\ell_m(x) \le \ell_k(x) - \hg (k-m-1)
%\quad\textrm{for\ } 0> x\ge x_{k-1}.
%\]
%Taking the exponentials and summing over $m=1,\dots, k-2$,
%we obtain
\begin{equation}\label{e_m_small}
\sum_{m=1}^{k-3} a_m t^{n_m} \le 3 a_k t^{n_k}
\left({e^{-\hg}} + {e^{-2\hg}} + {e^{-3\hg}} + \dots\right)
\quad\textrm{for\ } t\ge t_{k-1}.
\end{equation}

Since $\hg\ge 4$,
\eqref{e_m_large} and \eqref{e_m_small}
imply \eqref{e_iiiLem}. %\label{e_p2}
The proof of the lemma is finished.
\end{proof}

\subsection{Proof of the implication $\we\in \Cbb_{\trop} \Rightarrow \we\in\Cbb_{\app}$}\label{ss_abs_prf}
%Without loss of generality, assume that $\FF(t)=\FF_\we(t)$ is a strictly convex $\mathcal{C}^2$-function.
Fix a parameter $\hg\ge 4$.
The induction construction described in subsection~\ref{ss_basind}
provides numbers $n_{k}$, $a_k=\exp(b_k)$ and $t_k=\exp(x_k)$,
$k=1,2,\dots$. Put
\begin{equation}
\GG_\Delta(z) = \sum_{j= 0}^\infty a_{3j+\Delta} z^{n_{3j+\Delta}}, \quad \Delta=1,2,3.
\label{e_GG_def}
\end{equation}
The estimates below guarantee that $\GG_1$, $\GG_2$ and $\GG_3$ are well-defined holomorphic functions of $z\in\Cbb$.
In fact, we claim that
\begin{equation}\label{e_GG}
\frac{1}{2}e^{-h} \we(z) \le |\GG_1(z)|+ |\GG_2(z)|+ |\GG_3(z)| \le 6\we(z),
\quad z\in\Cbb.
\end{equation}

Indeed, assume that $|z| = t\in [t_{3k+\Delta-1}, t_{3k+\Delta}]$ for some $k\ge 0$ and $\Delta\in\{1,2,3\}$.
On the one hand,
\begin{align*}
\frac{1}{2}e^{-h} \we(t)
&\overset{\eqref{e_iiLem}}{\le}
\frac{1}{2}a_{3k+\Delta} t^{\ee_{3k+\Delta}}
\\
&\overset{\eqref{e_iiiLem}}{\le}
a_{3k+\Delta} t^{\ee_{3k+\Delta}} - \sum_{m\ge 1,\ |m-3k-\Delta|\ge 3} a_{m} t^{\ee_{m}}
\\
&\le \left| \sum_{j=0}^\infty a_{3j+\Delta} z^{\ee_{3j+\Delta}} \right| = |\GG_\Delta(z)|
\\
&\le |\GG_1(z)| + |\GG_2(z)| + |\GG_3(z)|.
\end{align*}
On the other hand, if $k\ge 1$, then
\begin{align*}
|\GG_1(z)| + |\GG_2(z)| + |\GG_3(z)|
&\le \sum_{m\ge 1} a_m t^{\ee_m}
\\
&= a_{3k+\Delta} t^{\ee_{3k+\Delta}} + \sum_{m\ge 1,\ |m-3k-\Delta|\ge 3} a_m t^{\ee_m}
\\
&\qquad
+ \left(a_{3k+\Delta-2} t^{\ee_{3k+\Delta-2}} +a_{3k+\Delta-1} t^{\ee_{3k+\Delta-1}} \right.
\\
&\qquad
+ \left. a_{3k+\Delta+1} t^{\ee_{3k+\Delta+1}}+ a_{3k+\Delta+2} t^{\ee_{3k+\Delta+2}}\right)
\\
&\overset{(\ref{e_iLem},\, \ref{e_iiiLem})}{\le}
\left(1+ \frac{1}{2}\right) a_{3k+\Delta} t^{\ee_{3k+\Delta}} + 4\we(t)
\\
&\overset{\eqref{e_iLem}}{\le}6\we(t)
\end{align*}
for $|z| = t\in [t_{3k+\Delta-1}, t_{3k+\Delta}]$.
If $k=0$, then the above estimates are even more simple.
So, \eqref{e_GG} holds.

The proof of Theorem~\ref{t_abstract} is finished.

\section{Explicit conditions}\label{s_explicit}
In this section, we prove Theorem~\ref{t_explicit}.
Methods of \cite{Dom59} are applied in \cite[Lemma~1]{Bo98}
to show that the restriction on $\FF_\we^{\prime\prime}$ formulated in Theorem~\ref{t_explicit}(i) is sufficient
for the property $\we\in\Cbb_{\pspc}$, hence,
for all equivalent properties listed in Theorem~\ref{t_abstract}.
Also, such sufficient conditions are deducible from results of Schmid \cite{Sch95}, \cite{Sch98}.
For the sake of completeness, we give a direct proof of Theorem~\ref{t_explicit}(i).

\begin{proof}[of Theorem~\ref{t_explicit}(i)]
By assumption, there exist $\al>0$ and $A\in\Rbb$ such that
$\FF^{\prime\prime}(x) := \FF^{\prime\prime}_\we(x) \ge \al$ for all $x\ge A$.

Set $L_0(x)\equiv \log\we(0)$, $x\in\Rbb$.
For $n\in\Nbb$, let $L_n$ denote the tangent to the graph of $\FF$ with slope $n$
and at a point $(a_n, y_n)$.
Let $(d_n, z_n)$ denote the intersection point of $L_n$ and $L_{n+1}$.
Put
\[
h_n = \FF(d_n) - L_n(d_n).
\]
Observe that $\we\in \Cbb_{\trop}$ if and only if
\begin{equation}\label{e_h_n}
\sup_{n=0,1,\dots} h_n < \infty.
\end{equation}

Fix $N(A)\in\Nbb$ such that $a_{N(A)} \ge A$.
We claim that $h_n\le 1/\alpha$ for $n\ge N(A)$.
Indeed, we have
\[
1> \FF^\prime(d_n) - \FF^\prime(a_n) =\int_{a_n}^{d_n} \FF^{\prime\prime}(x)\, dx
\ge \alpha(d_n - a_n).
\]
It remains to observe that $d_n - a_n \ge h_n$ by the convexity of $\FF$.
So, property~\eqref{e_h_n} holds, that is, $\we\in \Cbb_{\trop}$.
\end{proof}

\begin{proof}[of Theorem~\ref{t_explicit}(ii)]
Fix an $\er > 0$. Choose $n\in\Nbb$ such that $\FF^{\prime\prime}(x) < \er$ for all $x\in [n, +\infty)$.
Recall that $\we$ is rapid, so $\FF^\prime$ increases from zero to $+\infty$.
Hence, there exist numbers $a_n, b_n\in\Rbb$ such that
$\FF^\prime(a_n)=n$ and $\FF^\prime(b_n)=n + 1/2$.
We have
\begin{align*}
\FF(b_n)
&=\FF(a_n) + \int_{a_n}^{b_n}\FF^\prime(x)\, dx,
\\
L_n(b_n)
&= \FF(a_n) + n (b_n-a_n),
\end{align*}
where %as in the proof of Theorem~\ref{t_explicit}(i),
$L_n$ denotes the tangent to the graph of $\FF$ with slope $n$.

Let $\psi$ denote the function inverse to $\FF^\prime$.
Observe that $\psi^\prime(u) \ge \frac{1}{\er}$
for all $u\in [n, n+1]$.
Therefore,
\begin{equation}\label{e_fiLn}
  \begin{split}
  \Phi(b_n) - L_n(b_n) &= \int_{a_n}^{b_n} \left(\FF^\prime(x)-n\right)\, dx
  \\
  &= \int_n^{n+\frac{1}{2}} \left( \psi\left(n + \frac{1}{2} \right)-\psi(u) \right)\, du
  \\
  &\ge \int_n^{n+\frac{1}{2}} \frac{1}{\er} \left(n+\frac{1}{2} -u \right)\, du
  \\
  &=   \frac 1{8\er}
 \end{split}
\end{equation}
Analogously, we obtain
\begin{equation}\label{e_fiLn1}
  \Phi(b_n) - L_{n+1}(b_n) \ge  \frac 1{8\er}.
\end{equation}
Using \eqref{e_fiLn} and \eqref{e_fiLn1}, it is easy to conclude that
$h_n \ge \frac 1{8\er}$.
Since $\er>0$ is arbitrarily small,
property~\eqref{e_h_n} does not hold,
or, equivalently, $\we\not\in\Cbb_{\trop}$.
%The proof of Theorem~\ref{t_explicit} is finished.
\end{proof}

\begin{remark}
%Theorem~\ref{t_explicit} is, in a sense, sharp.
Let $\we: [0, +\infty) \to (0, +\infty)$ be a log-convex and ${\mathcal C}^2$-smooth
rapid weight function.
Simple examples show that both properties $\we\in\Cbb_{\trop}$ and $\we\notin\Cbb_{\trop}$
are compatible with oscillation of $\FF^\prime$, that is, with conditions
$\limsup_{x\to +\infty}\FF_\we^{\prime\prime}(x)>0$ and
$\liminf_{x\to +\infty}\FF_\we^{\prime\prime}(x)=0$.
\end{remark}

\section{Further results}\label{s_further}

\subsection{Holomorphic approximation in $\cd$}\label{ss_cd}
Let $d\ge 1$. Given a weight function $\we:[0,+\infty) \to (0,+\infty)$, we put
$\we(z) = \we(|z|)$, $z\in\cd$, to obtain the corresponding radial weight on $\cd$.
Definitions~\ref{def_n_app}--\ref{d_max} are naturally applicable to $\we$,
so the properties $\we\in\cd_{\app}$, $\we\in\cd_{\mathrm{ess}}$ and $\we\in\cd_{\max}$ are defined.
In this section, we extend Theorem~\ref{t_abstract} to $\dom=\cd$, $d\ge 1$.

\begin{theorem}\label{t_abstract_d}
Let $\we: [0, +\infty) \to (0, +\infty)$ be a weight function.
Then the following properties are equivalent:
\begin{itemize}
  \item the radial weight $\we$ on $\cd$ is approximable by a holomorphic map for all (some) $d\ge 1$ $(\we\in\cd_{\app});$
  \item $\we$ is approximable by the maximum of a holomorphic function modulus on $\cd$ for all (some) $d\ge 1$ $(\we\in\cd_{\max});$
  \item $\we$ is essential on $\cd$  for all (some) $d\ge 1$ $(\we\in\cd_{\mathrm{ess}});$
  \item $\we$ is equivalent to a log-tropical weight function $(\we\in\Cbb_{\trop})$.
\end{itemize}
\end{theorem}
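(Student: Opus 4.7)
The plan is to carry out the same cycle of implications as in Theorem~\ref{t_abstract}, reducing each property on $\cd$ to its one-dimensional counterpart. The key observation is that the associated weight on $\cd$ agrees with the one-dimensional one: for the radial weight $\we$ on $\cd$,
\[
\widetilde{\we}_\cd(z)=\widetilde{\we}_\Cbb(|z|),\qquad z\in\cd.
\]
The inequality $\le$ follows by restricting any admissible $f\in\hol(\cd)$ with $|f|\le\we$ to the complex line through $0$ and $z$, yielding a holomorphic function on $\Cbb$ admissible for $\widetilde{\we}_\Cbb$; the reverse inequality uses the trivial extension $f(z)=h(z_1)$ of an admissible $h\in\hol(\Cbb)$, together with the unitary invariance of the supremum. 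Consequently $\we\in\cd_{\mathrm{ess}}\Leftrightarrow\we\in\Cbb_{\mathrm{ess}}$, which by Theorem~\ref{t_abstract} is equivalent to $\we\in\Cbb_{\trop}$.

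The same extension $F(z)=h(z_1)$ gives $M_F(t)=M_h(t)$, hence $\Cbb_{\max}\Rightarrow\cd_{\max}$. The implication $\cd_{\max}\Rightarrow\cd_{\mathrm{ess}}$ follows because any $F\in\hol(\cd)$ with $M_F\asymp\we$ is, after rescaling, admissible for $\widetilde{\we}_\cd$, so $\widetilde{\we}_\cd(t)\ge M_F(t)\asymp\we(t)$; the same argument applied componentwise to $F=(F_1,\ldots,F_n)$ with $|F|\asymp\we$ gives $\cd_{\app}\Rightarrow\cd_{\mathrm{ess}}$. At this point the cycle will close as soon as one has the remaining implication $\we\in\Cbb_{\trop}\Rightarrow\we\in\cd_{\app}$ for every fixed $d\ge 1$, which will automatically yield the equivalence of ``for all $d$'' and ``for some $d$'' in each property.

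The non-trivial implication $\we\in\Cbb_{\trop}\Rightarrow\we\in\cd_{\app}$ is the main obstacle. My plan is to lift the construction of Section~\ref{s_induction} to $\cd$: keep the thinned tropical data $(a_k,n_k,t_k)$ and three-colour split $\Delta\in\{1,2,3\}$ of subsection~\ref{ss_basind}, but replace each scalar monomial $a_kz^{n_k}$ entering the one-dimensional series $g_\Delta$ by the symmetric Veronese packet $\bigl(\sqrt{a_k\binom{n_k}{\alpha}}\,z^\alpha\bigr)_{|\alpha|=n_k}$ in the variables of $\cd$. The multinomial identity $|z|^{2n}=\sum_{|\alpha|=n}\binom{n}{\alpha}|z^\alpha|^2$ ensures that the Euclidean norm of each packet equals $\sqrt{a_k}\,|z|^{n_k}$, so the estimates \eqref{e_iLem}--\eqref{e_iiiLem} of Lemma~\ref{l_real_segment} carry over verbatim with $t$ replaced by $|z|$, producing three vector-valued holomorphic series $G_1,G_2,G_3$ with $|G_1(z)|+|G_2(z)|+|G_3(z)|\asymp\we(|z|)$ on $\cd$.

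The hard point is producing a finite-dimensional target as Definition~\ref{def_n_app} requires: the packet of degree $n_k$ has $\binom{n_k+d-1}{d-1}$ entries, and naive concatenation gives an infinite-dimensional target. The route I would try first is to enlarge the separation parameter $h\ge 4$ of subsection~\ref{ss_basind} enough that monomials from distant packets may be superposed into a common vector-valued holomorphic component without their cross-terms damaging the squared-norm estimates; the geometric decay in \eqref{e_iiiLem}, together with the orthogonality of monomials of different total degrees under averaging over the torus $(S^1)^d$, should absorb the interference and leave three holomorphic maps $\cd\to\Cbb^{n(d)}$ whose target dimension depends only on $d$. A fallback is to work directly from $\widetilde{\we}_\cd\asymp\we$ and extract, via a normal-families argument at a discrete scale of radii together with interpolation of peak functions, a finite collection of admissible $f_i$'s whose modulus sum is comparable to $\we$. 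Implementing either compression uniformly across all radial shells $[t_{k-1},t_k]$ is the single step where substantive new input beyond Section~\ref{s_induction} is required.
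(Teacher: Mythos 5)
Your reductions among $\cd_{\app}$, $\cd_{\max}$, $\cd_{\mathrm{ess}}$ and the one-variable properties (restriction to complex lines, the extension $f(z)=h(z_1)$, and the identity $\widetilde{\we}_{\cd}(z)=\widetilde{\we}_{\Cbb}(|z|)$) are correct and are exactly the ``standard arguments'' the paper invokes. The proof, however, is not complete: the crucial implication $\we\in\Cbb_{\trop}\Rightarrow\we\in\cd_{\app}$ for $d\ge 2$ is left open, as you yourself acknowledge. Your Veronese-packet device does produce a holomorphic map of the right growth, but only into an infinite-dimensional target (besides, with the normalization $\sqrt{a_k\binom{n_k}{\alpha}}$ the packet norm is $\sqrt{a_k}\,|z|^{n_k}$, so the shell estimates of Lemma~\ref{l_real_segment} do not carry over ``verbatim''; you would need coefficients $a_k\sqrt{\binom{n_k}{\alpha}}$), and Definition~\ref{def_n_app} requires a finite $n$. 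Neither of your proposed compressions closes this gap. Superposing packets of \emph{different} degrees cannot help, because a single degree $n_k$ already occupies $\binom{n_k+d-1}{d-1}\to\infty$ coordinates, so the compression must take place \emph{within} each fixed degree; and torus or sphere orthogonality only controls $L^2$ averages, whereas the lower bound analogous to \eqref{e_GG} must hold pointwise --- note that every homogeneous polynomial of degree $n\ge 1$ in $d\ge 2$ variables vanishes somewhere on the unit sphere, so one genuinely needs several polynomials per degree whose moduli admit a uniform pointwise lower bound for their maximum, uniformly in the degree.

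That uniform family is precisely the ingredient the paper uses and your outline is missing: Aleksandrov--Ryll--Wojtaszczyk polynomials. For each $d$ there exist $Q=Q(d)\in\Nbb$ and $\delta=\delta(d)>0$ such that for every degree $k$ there are homogeneous polynomials $W_q[k]$, $q=1,\dots,Q$, of degree $k$ satisfying $|W_q[k]|\le 1$ and $\max_q|W_q[k]|\ge\delta$ on the unit sphere, as in \eqref{e_RW}. Substituting $W_q[n_{3j+\Delta}]$ for the monomials $z^{n_{3j+\Delta}}$ in the three series of Section~\ref{s_induction}, and enlarging the separation parameter to $h=h(\delta)>4$ so that the tail estimate \eqref{e_iiiLem} holds with $\delta/2$ in place of $1/2$ (this is \eqref{e_delta}), one obtains $3Q$ entire functions $g_{\Delta,q}$ on $\cd$ with $\frac{\delta}{2}e^{-h}\we(z)\le\sum_{\Delta=1}^{3}\sum_{q=1}^{Q}|g_{\Delta,q}(z)|\le 6Q\,\we(z)$, i.e.\ a holomorphic map into $\Cbb^{3Q(d)}$ with finite target dimension depending only on $d$. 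The existence of the $W_q[k]$ with $Q$ and $\delta$ independent of the degree is a nontrivial theorem (Ryll--Wojtaszczyk, Aleksandrov) and cannot be recovered from the averaging or normal-families arguments you sketch; until it (or an equivalent substitute) is invoked, the implication $\we\in\Cbb_{\trop}\Rightarrow\we\in\cd_{\app}$, and hence the theorem, is not proved.
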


\begin{proof}[About the proof of Theorem~\ref{t_abstract_d}]
As in Theorem~\ref{t_abstract}, the situation essentially simplifies when $\we$ is not rapid:
$\cd$-properties in question hold if and only if $\we(t)\asymp 1+t^m$, $t\ge 0$, for certain $m\in\Nbb$;
see Remark~\ref{r_nonrapid_2}.
So, without loss of generality, we may assume that $\we$ is a rapid weight function.

Standard arguments guarantee that $\we\in\Cbb_{\mathrm{ess}}$
if and only if $\we\in\cd_{\mathrm{ess}}$, $d\ge 2$.
Moreover, we clearly have the following implications:
\[
\we\in\Cbb_{\max} \Rightarrow \we\in\cd_{\max} \Rightarrow \we\in\cd_{\mathrm{ess}}.
\]
Therefore, the properties $\we\in\cd_{\mathrm{ess}}$ and $\we\in\cd_{\max}$ do not depend on $d\ge 1$,
and they are equivalent to all properties listed in Theorem~\ref{t_abstract}, in particular,
to the property $\we\in\Cbb_{\trop}$.

If $\we\in\cd_\app$, then clearly $\we\in\Cbb_\app$.
Hence, $\we\in\Cbb_{\trop}$ by Theorem~\ref{t_abstract}.
So, to prove the theorem, it suffices to show that the property
$\we\in \Cbb_{\trop}$ implies $\we\in\cd_{\app}$.
For $d=1$, Theorem~\ref{t_abstract} applies.
For $d\ge 2$, we argue as in the proof of the implication
$\we\in \Cbb_{\trop} \Rightarrow \we\in\Cbb_{\app}$, replacing the monomials
$z^k$, $k\ge 0$, by Aleksandrov--Ryll-Wojtazsczyk polynomials of $d$ variables
(see \cite{Aab86}, \cite{RW83}).
Namely, there exist $Q=Q(d)\in\Nbb$ and $\delta=\delta(d)>0$ such that the following property holds:
for $k=0,1,2,\dots$, there exist homogeneous holomorphic polynomials $W_q[k]$ of degree $k$, $q=1,2,\dots, Q$,
such that
\begin{equation}\label{e_RW}
|W_q[k](\za)|\le 1\quad\textrm{and}\quad\max_{q=1,2,\dots, Q} |W_q[k](\za)|\ge\delta
\quad \textrm{for all}\ \za\in\cd,\ |\za|=1.
\end{equation}
Put
\[
g_{\Delta, q}(z) = \sum_{j=0}^\infty a_{3j+\Delta} W_q[n_{3j+\Delta}](z),
\quad \Delta=1,2,3,\ q=1,2,\dots, Q,
\]
where the induction construction from subsection~\ref{ss_basind} is used
to define the numbers $n_{k}$, $a_k=\exp(b_k)$ and $t_k=\exp(x_k)$,
$k=1,2,\dots$.
Also, applying \eqref{e_m_large} and \eqref{e_m_small}, select $h=h(\delta)>4$ so large that
\begin{equation}\label{e_delta}
\sum_{m\ge 1,\ |m-k|\ge 3} a_m t^m
\le \frac{\delta}{2} a_k t^k, \quad t\in [t_{k-1}, t_k],\ k=1,2,\dots.
\end{equation}
Now, we claim that
\[
\frac{\delta}{2} e^{-h} \we(z) \le
\sum_{\Delta=1}^3 \sum_{q=1}^Q |\GG_{\Delta, q}(z)| \le 6Q\we(z), \quad z\in\cd.
\]
To prove the above estimates, we use \eqref{e_RW} and \eqref{e_delta}.
The argument is essentially the same as in subsection~\ref{ss_abs_prf},
so we omit further details.
\end{proof}

\subsection{Approximation by harmonic maps}\label{ss_harm}

In this section, we consider a harmonic analog of the key property $\we\in\cd_{\app}$.
In particular, we obtain a necessary condition for harmonic approximation in $\RRq$, $q\ge 2$.
If the dimension $q$ is even, then the condition under consideration is also necessary.

\begin{definition}\label{def_harm}
%Let $n\in\Nbb$.
%, $n\ge 2$.
A radial weight $\we$ on $\RRq$, $q\ge 2$, is called \textsl{approximable by a harmonic map}
(in brief, $\we\in\RRq_{\har}$) if there exists $m\in\Nbb$ and a harmonic map $h: \RRq\to \Rbb^m$ such that
\[
|h(y)| \asymp \we(y), \quad y\in\RRq.
%\eqno{(\RRq_{\har})}
\]
\end{definition}

\begin{lemma}\label{l_harm}
Let $\we: [0,+\infty) \to (0, +\infty)$ be a weight function.
Then the following properties are equivalent:
\begin{itemize}
\item[(a)] $\we$ is equivalent to a log-tropical weight function $(\we\in\Cbb_{\trop});$
\item[(b)] $\we^2(t)$ is equivalent to a power series
          $s_2(t) = \sum_{k=0}^\infty a_k t^{2k}$ with $a_k\ge 0$, $t\ge 0$.
\end{itemize}
\end{lemma}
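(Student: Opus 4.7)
The plan is to exploit the equivalence $\we \in \Cbb_{\trop} \Leftrightarrow \we \in \Cbb_{\pspc}$ supplied by Theorem~\ref{t_abstract}, together with the change of variables $\tau = t^2$, applied to the auxiliary weight $v(\tau) := \we^2(\sqrt{\tau})$. I first observe that $v$ is a genuine weight function on $[0,+\infty)$: since $\we$ is non-decreasing, continuous and unbounded, so is $v$. Under this reduction, the even-degree condition on $\we^2$ in (b) becomes an unrestricted $\Cbb_{\pspc}$ statement for $v$, and the tropical condition on $\we$ in (a) becomes a $\Cbb_{\trop}$ statement for $v$, so Theorem~\ref{t_abstract} transports one into the other.

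For the implication (b) $\Rightarrow$ (a), I assume $\we^2(t) \asymp \sum_{k=0}^\infty a_k t^{2k}$ and rewrite this as $v(\tau) \asymp \sum_k a_k \tau^k$, so $v \in \Cbb_{\pspc}$. Theorem~\ref{t_abstract} then yields $v \in \Cbb_{\trop}$, giving $c_m \ge 0$ and $n_m \in \Zbb_+$ with $v(\tau) \asymp \max_m c_m \tau^{n_m}$. Returning to $t$, this reads $\we^2(t) \asymp \max_m c_m t^{2n_m}$. Taking positive square roots (which commutes with both $\asymp$ and $\max$ on positive quantities) yields $\we(t) \asymp \max_m \sqrt{c_m}\, t^{n_m}$, which is log-tropical, so $\we \in \Cbb_{\trop}$.

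For the reverse implication (a) $\Rightarrow$ (b), suppose $\we \in \Cbb_{\trop}$, so that $\we(t) \asymp \max_m e^{B_m} t^{N_m}$ for certain $B_m \in \Rbb$ and $N_m \in \Zbb_+$, as in the setting of subsection~\ref{ss_set}. Squaring gives $\we^2(t) \asymp \max_m e^{2B_m} t^{2N_m}$, and substituting $\tau = t^2$ produces $v(\tau) \asymp \max_m e^{2B_m} \tau^{N_m}$, a log-tropical weight in $\tau$. Hence $v \in \Cbb_{\trop}$, and Theorem~\ref{t_abstract} delivers $v \in \Cbb_{\pspc}$: there exist $a_k \ge 0$ with $v(\tau) \asymp \sum_k a_k \tau^k$. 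Reverting $\tau = t^2$ gives $\we^2(t) \asymp \sum_k a_k t^{2k}$, as required.

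There is no serious obstacle in this argument; the only point worth checking is that the monomial exponents remain in $\Zbb_+$ after halving, which is exactly what the parity constraint (even exponents $2k$ in $t$ corresponding to integer exponents $k$ in $\tau$) guarantees. Everything else is routine since squaring and taking square roots are monotone operations on $(0, +\infty)$ and therefore preserve the equivalence relation $\asymp$.
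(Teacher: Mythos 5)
Your argument is correct and is essentially the paper's own proof: both directions rest on the substitution $\tau=t^2$ (your auxiliary weight $v(\tau)=\we^2(\sqrt{\tau})$ plays the role of the paper's $G$ resp.\ $q$) combined with the equivalence $\Cbb_{\trop}\Leftrightarrow\Cbb_{\pspc}$ from Theorem~\ref{t_abstract}, followed by squaring or taking square roots of the tropical representation, exactly as in the paper's manipulation of the logarithmic transforms $\FF_F$, $\FF_G$, $\FF_f$.
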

\begin{proof}
(a)$\Rightarrow$(b)
We have $\we\in\Cbb_{\trop}$, that is, $\we(t)$ is equivalent to $v(t)$
such that $\FF_v(x) = \max_{k\ge 0} (b_k + kx)$.
Therefore, $\we^2(t) \asymp v^2(t) = F(t)$, where
$\FF_F(x)= \max_{k\ge 0} (2b_k + 2kx)$.
Observe that $F(t)= \max_{k\ge 0} e^{2b_k} t^{2k}$, hence, $F(t)= G(t^2)$,
where $\FF_G(x) = \max_{k\ge 0} (2b_k + kx)$.
Since $G(t)$ is a log-tropical weight function, Theorem~\ref{t_abstract}
provides a power series $s(t)= \sum_{k=0}^\infty a_k t^k$ such that $a_k\ge 0$ and $G(t)\asymp s(t)$.
In sum, $\we^2(t) \asymp F(t) = G(t^2) \asymp s(t^2) = \sum_{k=0}^\infty a_k t^{2k}$, $a_k \ge 0$.
So, (a) implies (b).

(b)$\Rightarrow$(a)
We are given a power series $s_2(t) = \sum_{k=0}^\infty a_k t^{2k}$ such that $a_k\ge 0$ and $s_2(t)\asymp w^2(t)$.
Applying Theorem~\ref{t_abstract} to $q(t)= \sum_{k=0}^\infty a_k t^{k} \in\Cbb_{\pspc}$, we obtain a log-tropical weight function $G(t)$
such that $q(t) \asymp G(t)$.
Put $F(t)= G(t^2)$, then $s_2(t) = q(t^2) \asymp G(t^2) = F(t)$.
Also, we have $\FF_F(x) = \max_{k\ge 0} (b_k + 2kx)$ for certain $b_k\in\Rbb$.
The equality $\FF_f(x) = \max_{k\ge 0} (b_k/2 + kx)$
defines a log-tropical weight function $f(t): [0,+\infty) \to (0, +\infty)$
such that $f^2(t) = F(t)$. In sum, we obtain $\we^2(t) \asymp s_2(t) \asymp F(t) = f^2(t)$.
So, $\we(t)\asymp f(t)\in \Cbb_{\trop}$, that is, (b) implies (a).
\end{proof}

\begin{proposition}\label{p_harm}
Let $q\ge 2$ and let $\we$ be a radial weight on $\RRq$.
\begin{itemize}
\item[(i)]
If $\we\in\RRq_{\har}$, then the weight function $\we$ is equivalent to
a log-tropical weight function.
\item[(ii)]
If $q$ is even, then $\we\in\RRq_{\har}$ if and only if $\we\in\Cbb_{\trop}$.
\end{itemize}
\end{proposition}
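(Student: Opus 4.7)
The plan is to reduce both parts to Lemma~\ref{l_harm}, which characterizes $\we\in\Cbb_{\trop}$ by the property that $\we^2$ is equivalent to a power series in $t^2$ with non-negative coefficients. For (i), such a power series will arise as the spherical mean of $|h|^2$ via the solid spherical harmonic expansions of the components of $h$. For (ii), once $\we\in\Cbb_{\trop}$ is known, I would invoke Theorem~\ref{t_abstract_d} with $q=2d$ and split the resulting holomorphic map into real and imaginary parts to obtain a harmonic map with the same modulus.

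\textbf{Part (i).} Let $h=(h_1,\dots,h_m):\RRq\to\Rbb^m$ be harmonic with $|h(y)|\asymp\we(|y|)$. Each $h_j$, being entire harmonic on $\RRq$, admits a decomposition $h_j=\sum_{k\ge 0}H_{j,k}$ into solid spherical harmonics $H_{j,k}$ of degree $k$, converging uniformly on compact sets. Using the homogeneity $H_{j,k}(t\xi)=t^k H_{j,k}(\xi)$ together with the classical orthogonality $\int_{S^{q-1}}H_{j,k}H_{j,k'}\,d\sigma=0$ for $k\ne k'$, the normalized spherical mean becomes
\[
\mathcal{M}(t):=\frac{1}{|S^{q-1}|\,t^{q-1}}\int_{|y|=t}|h(y)|^2\,d\sigma(y)=\sum_{k=0}^\infty c_k t^{2k},\qquad c_k\ge 0.
\]
The radial pointwise bound $|h(y)|^2\asymp\we^2(|y|)$ descends to the spherical average, so $\mathcal{M}(t)\asymp\we^2(t)$. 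This is exactly condition (b) of Lemma~\ref{l_harm}, hence $\we\in\Cbb_{\trop}$.

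\textbf{Part (ii).} The forward direction follows from (i). Conversely, assume $q=2d$ and $\we\in\Cbb_{\trop}$, and identify $\RRq$ with $\cd$. By Theorem~\ref{t_abstract_d}, there exists a holomorphic map $f=(f_1,\dots,f_n):\cd\to\Cbb^n$ with $|f(z)|\asymp\we(|z|)$. Writing $f_j=u_j+iv_j$, each $u_j$ and $v_j$ is harmonic on $\Rbb^{2d}$, and the real map $H=(u_1,v_1,\dots,u_n,v_n):\Rbb^{2d}\to\Rbb^{2n}$ is harmonic componentwise with $|H(y)|=|f(z)|\asymp\we(|y|)$. This gives $\we\in\RRq_{\har}$.

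\textbf{Main obstacle.} The only genuinely analytic point is justifying termwise integration of $|h_j|^2=\bigl(\sum_k H_{j,k}\bigr)^2$ over spheres of arbitrary radius. Uniform convergence on compact sets (which holds for entire harmonic functions on $\RRq$) together with the standard $L^2(S^{q-1})$-orthogonality of spherical harmonics of distinct degrees reduces this to a routine cross-term-vanishing computation; no further tool is required. Everything else is bookkeeping converting the chosen map into the form required by Lemma~\ref{l_harm} or by Theorem~\ref{t_abstract_d}.
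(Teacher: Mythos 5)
Your proposal is correct and follows essentially the same route as the paper: part (i) via the expansion of each component into harmonic homogeneous polynomials, $L^2(S^{q-1})$-orthogonality, and the spherical average producing a power series in $t^2$ to which Lemma~\ref{l_harm} applies; part (ii) via Theorem~\ref{t_abstract_d} and taking real and imaginary parts of the holomorphic map. No gaps to report.
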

\begin{proof}
(i) Let $\we\in\RRq_{\har}$. So, we are given a harmonic map $h=(h_1, \dots, h_m): \RRq\to \Rbb^m$
such that
\[
|h_1(y)|^2 + \dots + |h_m(y)|^2 \asymp \we^2(y), \quad y\in\RRq,
\]
or, equivalently,
\begin{equation}\label{e_har_equiv}
|h_1(t\xi)|^2 + \dots + |h_m(t\xi)|^2 \asymp \we^2(t), \quad 0\le t <+\infty,\ \xi\in S_q,
\end{equation}
where $S_q$ denotes the unit sphere of $\RRq$.

For $j=1,\dots, m$, we have
\[
h_j(y) = \sum_{k=0}^\infty P_{j,k}(y), \quad y\in \RRq,
\]
where $P_{j,k}$ is a harmonic homogeneous polynomial of degree $k$, and
the series converges uniformly on compact subsets of $\RRq$.
Let $\si_q$ denote the normalized Lebesgue measure on $S_q$.
For $k_1\neq k_2$, $P_{j, k_1}$ and $P_{j, k_2}$ are orthogonal
in $L^2(S_q, \si_q)$, thus
\[
\int_{S_q} |h_j(t\xi)|^2 \, d\si_q(\xi)
=
\sum_{k=0}^\infty t^{2k} \|P_{j,k}\|_{L^2(S_q, \si_q)}^2,
\quad 0\le t < +\infty,\ j=1,2,\dots, m.
\]
Therefore, integrating \eqref{e_har_equiv} with respect to $\si_q$,
we obtain
\[
\we^2(t) \asymp \sum_{k=0}^\infty a_k^2 t^{2k}\quad \textrm{for certain\ } a_k\in\Rbb.
\]
Applying Lemma~\ref{l_harm}, we deduce that $\we\in\Cbb_{\trop}$.

(ii) Let $q=2d$.
If $\we\in\Cbb_{\trop}$, then Theorem~\ref{t_abstract_d} provides a holomorphic map
$f=(f_1,\dots, f_n): \cd\to \Cbb^n$ such that $|f|\asymp \we$.
Identifying $\RRq$ and $\cd$ and putting $h=(\RRe f_1, \IIm f_1, \dots, \RRe f_n, \IIm f_n)$,
we conclude that $\we\in\Cbb_{\trop}$ implies $\we\in\RRq_{\har}$.
The reverse implication holds by part~(i) of the proposition.
\end{proof}

\section{Applications}\label{s_applic}

\subsection{Proper holomorphic immersions and embeddings}\label{ss_proper}

A proper holomorphic map is \textsl{an immersion} if
its Jacobian is non-degenerate everywhere.
By definition, \textsl{a proper holomorphic embedding} is a proper holomorphic immersion which is one-to-one.

\subsubsection{Embeddings of the unit disk}
%It is not easy to construct
A proper holomorphic embedding $f: \Dbb\to\Cbb^2$ is not trivial to construct;
the first example was given by K.~Kasahara and T.~Nishino (see \cite{KN1969}, \cite{Steh70}).
Later the unit disk was replaced by an annulus (see \cite{Lau73}), the punctured disk (see \cite{AlexH77})
and more sophisticated planar domains.
However, the following problem remains open (see \cite{FW13}, \cite{GS95}):
Can any planar domain be properly holomorphically
embedded to $\Cbb^{2}$?

As mentioned in Section~\ref{ss_app_proper}, J.~Globevnik \cite{G02Edin}
asked whether a proper holomorphic embedding $f: \Dbb\to\Cbb^2$ may grow arbitrarily slowly.
Using Theorem~1.2 from \cite{AD15}, we obtain the following result.

\begin{corollary}[see {\cite[Corollaries~2.3 and 2.4]{ADcras}}]\label{c_disk_imm_emb}
Let $\we$ be a log-convex radial weight on $\Dbb$.
Then there exists a proper holomorphic immersion $f: \Dbb\to\Cbb^2$ such that $|f|\asymp \we$.
Also, there exists a proper holomorphic embedding $f: \Dbb\to\Cbb^3$ such that $|f|\asymp \we$.
\end{corollary}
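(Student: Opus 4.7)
The plan is to obtain both maps by post-processing the single holomorphic map that Theorem~\ref{t_blms} already supplies, using in one case a small linear perturbation to clear common critical zeros and in the other the addition of an extra coordinate.

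First I would invoke Theorem~\ref{t_blms}: since $\we$ is log-convex, we have $\we\in\Dbb_{\log}$, and the theorem produces $f=(f_1,f_2)\in\hol(\Dbb,\Cbb^2)$ with $|f(z)|\asymp\we(z)$. As $\we$ is unbounded, $|f(z)|\to\infty$ when $|z|\to 1^-$, so $f$ is automatically proper; properness will persist for any modification whose modulus remains equivalent to $\we$.

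To turn $f$ into an immersion into $\Cbb^2$ I would kill the joint zeros of $f_1'$ and $f_2'$ by a generic linear perturbation. Look at the smooth map $J\colon\Dbb\to\Cbb^2$ defined by $J(z)=(f_1'(z),f_2'(z))$; its image has Hausdorff dimension at most two in the four-real-dimensional $\Cbb^2$, hence Lebesgue measure zero. Pick $(\alpha_1,\alpha_2)\in\Cbb^2\setminus(-J(\Dbb))$ of arbitrarily small norm and set
\[
g(z)=(f_1(z)+\alpha_1 z,\; f_2(z)+\alpha_2 z).
\]
Then $g'(z)=(f_1'(z)+\alpha_1,\, f_2'(z)+\alpha_2)\ne 0$ on $\Dbb$, so $g$ is an immersion. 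The additive perturbation is uniformly bounded by $|\alpha_1|+|\alpha_2|$, whereas $\we\ge\we(0)>0$, so taking this sum small enough preserves $|g|\asymp|f|\asymp\we$; properness is inherited from the preceding step.

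For the embedding into $\Cbb^3$ I would simply take $F(z)=(f_1(z),f_2(z),z)$. The third coordinate makes $F$ injective and forces $F'(z)=(f_1'(z),f_2'(z),1)\ne 0$ everywhere, so $F$ is an injective proper holomorphic immersion, i.e., a proper holomorphic embedding; and $|F(z)|^2=|f(z)|^2+|z|^2\asymp\we(z)^2$ because $|z|<1$ while $\we\ge\we(0)>0$. The main delicate point is the immersion case in $\Cbb^2$: the perturbation must be simultaneously small enough not to disturb $|g|\asymp\we$ and generic enough to clear every joint zero of $(f_1',f_2')$, and the measure-zero observation handles both requirements at once, uniformly covering the degenerate situations in which one of $f_1,f_2$ happens to be constant.
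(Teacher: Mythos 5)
Your argument is correct, and both constructions work: the generic small translation $(\alpha_1 z,\alpha_2 z)$ with $(\alpha_1,\alpha_2)$ chosen outside the measure-zero set $-J(\Dbb)$ does make $g'$ nowhere vanishing while keeping $|g|\asymp\we$ (since the perturbation is bounded and $\we\ge\we(0)>0$), and appending the coordinate $z$ gives an injective immersion into $\Cbb^3$ with $|F|\asymp\we$; properness in both cases follows because $\we$ is unbounded. One small bookkeeping point: Theorem~\ref{t_blms} as stated only yields some $n$, and the fact that one may take $n=2$ (which your $\Cbb^2$ immersion needs) comes from \cite[Theorem~1.2]{AD15}, as recalled in Section~\ref{ss_app_proper}; you should cite that explicitly. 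As for the comparison: the paper does not reprove this corollary, referring instead to \cite[Corollaries~2.3 and 2.4]{ADcras}, but its own treatment of the analogous statements over $\Cbb$ (Corollaries~\ref{c_cbb_imm} and \ref{c_cbb_emb}) handles the immersion step differently: there one exploits the explicit three-component construction of Section~\ref{ss_abs_prf}, modifies one component to $z+g_3(z)$, and removes common zeros of the derivatives by composing one component with a rotation $z\mapsto e^{i\theta}z$, using that the relevant zero sets are countable; the embedding is then obtained, as in your proof, by adjoining the coordinate $z$. Your measure-theoretic perturbation in the target is more generic and does not depend on any structural features of the map supplied by Theorem~\ref{t_blms} (in particular it handles degenerate components uniformly), whereas the paper's rotation trick has the advantage of not altering the number of components and of adapting to the radially structured maps used in the $\Cbb$ and $\cd$ settings, where rotating a single component still preserves the lower bound annulus by annulus.
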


\begin{remark}\label{r_globevnik}
For the radial weights on $\Dbb$,
the log-convexity is a regularity condition, not a growth one.
In particular, a log-convex weight function may grow arbitrarily slowly or arbitrarily rapidly.
So, a proper holomorphic immersion $f: \Dbb\to\Cbb^2$ may grow arbitrarily slowly.
It would be interesting to know whether Corollary~\ref{c_disk_imm_emb}
extends to appropriate holomorphic embeddings $f: \Dbb\to\Cbb^2$.
\end{remark}

\subsubsection{Embeddings of the unit ball}
For the unit ball $\bd$ of $\cd$, $d\ge 2$, proper holomorphic embeddings $f: \bd\to\Cbb^n$
have been investigated by many authors
in a more general setting of Stein manifolds $M_d$ of dimension $d$.
The following essentially sharp result
was obtained by Y.~Eliashberg and M.~Gromov \cite{EG92}:
Every Stein manifold $M_d$ of dimension $d$ can be properly
holomorphically embedded to $\Cbb^{n(d)}$
for the minimal integer $n(d) > (3d+1)/2$.

For $\bd$ with arbitrary $d\ge 1$, Theorem~1.3 from \cite{AD15} implies the following assertion.

\begin{corollary}\label{c_ball_emb}
Let $\we$ be a log-convex radial weight on $\bd$. Then there exists a number $n=n(d)$
and a proper holomorphic embedding $f: \bd\to\Cbb^{n(d)}$ such that $|f|\asymp \we$.
\end{corollary}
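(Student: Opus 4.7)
The plan is to start from the approximating holomorphic map provided by \cite[Theorem~1.3]{AD15} and then enlarge its codomain by the tautological embedding $\bd\hookrightarrow\Cbb^{d}$ in order to upgrade the map to a proper holomorphic embedding without disturbing the growth.

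First, apply \cite[Theorem~1.3]{AD15} to the log-convex radial weight $\we$ to obtain $n\in\Nbb$ and a holomorphic map $f=(f_1,\dots,f_n)\colon \bd\to\Cbb^n$ with $|f(z)|\asymp\we(z)$ on $\bd$. Since $\we$ is an unbounded weight function (by our standing convention), $|f(z)|\to\infty$ as $|z|\to 1$, so $f$ is automatically proper. However, $f$ need not be injective, nor need its differential be of full rank.

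Secondly, use the bounded holomorphic embedding $\iota\colon\bd\hookrightarrow\Cbb^{d}$ given by $\iota(z)=z$, which is injective with non-degenerate Jacobian at every point. Define
\[
F=(f_1,\dots,f_n,z_1,\dots,z_d)\colon\bd\to\Cbb^{n+d}.
\]
The differential $dF$ contains the $d\times d$ identity block coming from $\iota$, so $F$ is an immersion; the equality $F(z)=F(w)$ forces $\iota(z)=\iota(w)$ and hence $z=w$, so $F$ is injective. Since $\we\ge\we(0)>0$ is bounded below while $|z|<1$ is bounded above,
\[
|F(z)|^{2}=|f(z)|^{2}+|z|^{2}\asymp\we^{2}(z)+1\asymp\we^{2}(z),
\]
so $|F|\asymp\we$ and in particular $|F(z)|\to\infty$ as $|z|\to 1$. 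Thus $F$ is proper. Setting $n(d)=n+d$ produces the desired proper holomorphic embedding with the prescribed growth.

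There is essentially no obstacle beyond the input \cite[Theorem~1.3]{AD15}; the construction is modular and does not interact with the sharp Eliashberg--Gromov bound $n(d)>(3d+1)/2$. The only mild point to verify is that the equivalence $|F|\asymp\we$ survives the addition of the bounded coordinates $z_1,\dots,z_d$, which is immediate from $\we\ge\we(0)>0$. Note that the codimension obtained here is $n+d$, where $n$ is the codimension in the AD15 approximation; an optimization of $n(d)$ is not pursued.
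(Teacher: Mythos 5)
Your proposal is correct and follows what the paper intends: Corollary~\ref{c_ball_emb} is stated as an immediate consequence of \cite[Theorem~1.3]{AD15}, and the device you use (appending the tautological coordinates $z_1,\dots,z_d$, which are bounded on $\bd$ while $\we\ge\we(0)>0$, so the growth equivalence and properness are unaffected while injectivity and full rank of the differential become automatic) is exactly the graph-map trick the paper itself employs in the proof of Corollary~\ref{c_cbb_emb}. So this is essentially the same approach, merely written out in detail, at the cost of $d$ extra coordinates in $n(d)$.
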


\subsubsection{Embeddings of $\cd$, $d\ge 1$}
If $\we: [0,+\infty) \to (0, +\infty)$ is an arbitrary rapid log-convex
radial weight, then clearly there is no direct extension of Corollary~\ref{c_disk_imm_emb}
to proper holomorphic immersions and embeddings of $\Cbb$ into appropriate $\Cbb^n$.
Indeed, by Theorem~\ref{t_abstract}, $\we$ has to be equivalent to a log-tropical weight function.
In fact, Theorem~\ref{t_abstract} provides the following result of this type.

\begin{corollary}\label{c_cbb_imm}
Let $\we$ be a log-tropical rapid radial weight on $\Cbb$.
Then there exists a proper holomorphic immersion $f: \Cbb\to\Cbb^3$ such that $|f|\asymp \we$.
\end{corollary}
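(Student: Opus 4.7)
\textbf{Proof proposal for Corollary~\ref{c_cbb_imm}.}

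The plan is to start from the holomorphic map produced by Theorem~\ref{t_abstract} and then kill its critical points with a small generic perturbation. Since $\we\in\Cbb_{\trop}$, Theorem~\ref{t_abstract} supplies a holomorphic map $f_0=(G_1,G_2,G_3)\colon\Cbb\to\Cbb^3$ with $|f_0|\asymp\we$; indeed the construction in Section~\ref{s_induction} gives $G_\Delta(z)=\sum_{j\geq 0}a_{3j+\Delta}z^{n_{3j+\Delta}}$ for $\Delta=1,2,3$, where $a_k=\exp(b_k)>0$ and the three monomial supports partition the strictly increasing sequence $0=n_1<n_2<\cdots$. This $f_0$ is automatically proper because $\we$ is unbounded, and because the source has complex dimension $1$, the immersion property reduces to showing $f_0'(z)\neq 0$ for every $z\in\Cbb$.

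Passing to the derivatives, the three entire functions $G_1'$, $G_2'$, $G_3'$ have pairwise disjoint monomial supports (the integers $n_k-1$ remain distinct across $k\geq 2$, while the term $n_1=0$ drops out) and each is nontrivial since all coefficients are positive. Hence every difference $G_i'-G_j'$ is a nonzero entire function with discrete zero set, so the triple-coincidence set $S=\{z\in\Cbb:G_1'(z)=G_2'(z)=G_3'(z)\}$ is at most countable. For $\lambda\in\Cbb$ I set $f_\lambda(z)=f_0(z)+\lambda(z,z,z)$; its derivative $(G_1'(z)+\lambda,G_2'(z)+\lambda,G_3'(z)+\lambda)$ vanishes precisely at those $z\in S$ with $G_1'(z)=-\lambda$, so the set of bad parameters $E=\{-G_1'(z_0):z_0\in S\}\subset\Cbb$ is countable. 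I would choose $\lambda\in\Cbb\setminus E$ with $|\lambda|$ arbitrarily small.

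To keep $|f_\lambda|\asymp\we$, I would use rapidness of $\we$: the continuous function $t\mapsto t/\we(t)$ vanishes at $0$ and at $+\infty$, so it is bounded on $[0,+\infty)$ by some constant $M$. Combined with $|f_0|\asymp\we$, the estimate $|f_\lambda(z)-f_0(z)|=|\lambda|\sqrt{3}\,|z|\leq|\lambda|\sqrt{3}M\,\we(|z|)$ shows that shrinking $|\lambda|$ preserves both the upper bound $|f_\lambda|\leq C\we$ and the lower bound $|f_\lambda|\geq c\we$ throughout $\Cbb$. The resulting $f_\lambda$ is then a proper holomorphic immersion $\Cbb\to\Cbb^3$ with $|f_\lambda|\asymp\we$, which proves the corollary.

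The main obstacle is controlling the critical set of $f_0$. The specific construction underlying Theorem~\ref{t_abstract}, in which $G_1$, $G_2$, $G_3$ have pairwise disjoint monomial supports, is what makes this tractable: the derivatives inherit this disjointness and become pairwise distinct entire functions, forcing their triple-coincidence set to be at most countable. Without such structural information on $f_0$, a blanket ``holomorphic map with $|f_0|\asymp\we$'' would not obviously admit a small perturbation restoring the immersion property while preserving the asymptotic equivalence.
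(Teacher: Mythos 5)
Your argument is correct, and it follows the same overall strategy as the paper's proof: start from the map $(\GG_1,\GG_2,\GG_3)$ of \eqref{e_GG_def} and kill the common zeros of the derivative by a perturbation chosen generically from the complement of a countable exceptional set. The implementations differ, though. The paper perturbs only the third coordinate, replacing $\GG_3(z)$ by $z+\GG_3(e^{i\theta}z)$, and uses only that $\GG_1(0)=\we(0)\neq 0$ and $\GG_3'(0)=0$; the rotation $\theta$ is chosen outside a countable set so that the first and third coordinates (and their derivatives) have no common zero, and the equivalence $|f_1|+|f_2|+|f_3|\asymp\we$ is then re-established by the estimates of Section~\ref{ss_abs_prf}, rapidness absorbing the extra linear term. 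You instead add $\lambda(z,z,z)$ and exploit the disjointness of the monomial supports of $\GG_1',\GG_2',\GG_3'$ (a structural feature of the construction that the paper does not need) to conclude that their triple-coincidence set is countable, so all but countably many $\lambda$ give an immersion; choosing $|\lambda|$ small and using the boundedness of $t/\we(t)$, which again comes from rapidness, makes the preservation of $|f_\lambda|\asymp\we$ immediate, with no need to revisit the estimates of Section~\ref{s_induction}. Both routes are sound; yours trades the rotation trick for the support-disjointness observation and yields a somewhat cleaner verification that the equivalence survives the perturbation.
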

\begin{proof}
Given a log-tropical rapid radial weight $\we$, formula~\eqref{e_GG_def}
defines a holomorphic map $g=(g_1, g_2, g_3): \Cbb\to \Cbb^3$ such that $|g| \asymp \we$,
$g_1(0) = \we(0)>0$
and $g_3^\prime(0)=0$.
Put $h_1(z) = g_1(z)$, $h_2(z)=g_2(z)$ and $h_3(z) = z + g_3(z)$.
Then $h_1(0)\neq 0$ and $h_3^\prime(0) \neq 0$.
The zero sets of $h_j(z)$ and $h_j^\prime(z)$, $j=1,3$, are countable ones,
hence, there exists $\theta\in [0, 2\pi)$ such that,
for $f_j(z)= h_j(z)$, $j=1,2$, and $f_3(z) = h_3(e^{i\theta}z)$, we have
$(f_1(z), f_2(z), f_3(z))\neq (0,0,0)$ and $(f_1^\prime(z), f_2^\prime(z), f_3^\prime(z))\neq (0,0,0)$
for all $z\in\Cbb$.
To finish the proof, it remains to observe that $|f_1|+|f_2|+|f_3| \asymp \we$
by the arguments used in Section~\ref{ss_abs_prf}.
\end{proof}

\begin{corollary}\label{c_cbb_emb}
Let $\we$ be a log-tropical rapid radial weight on $\Cbb$.
Then there exists a proper holomorphic embedding $f: \Cbb\to\Cbb^4$ such that $|f|\asymp \we$.
\end{corollary}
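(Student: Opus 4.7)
The plan is to take the holomorphic map $g=(g_1,g_2,g_3)\colon\Cbb\to\Cbb^3$ constructed in Section~\ref{ss_abs_prf} via formula \eqref{e_GG_def}, and simply append the identity function as a fourth coordinate to buy injectivity for free. Concretely, I would define
\[
f(z) = (g_1(z), g_2(z), g_3(z), z), \qquad z\in\Cbb.
\]
The reason this should work is that injectivity becomes trivial (the fourth slot distinguishes distinct points), and at the same time the immersion condition is automatic since the derivative $f'(z) = (g_1'(z), g_2'(z), g_3'(z), 1)$ is never zero. In other words, unlike the immersion construction of Corollary~\ref{c_cbb_imm}, I do not need any rotation trick to dodge common zeros of $g$ and $g'$; the fourth coordinate handles both nuisances at once.

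The only part that needs justification is the growth estimate $|f|\asymp\we$. The lower bound is immediate: $|f(z)|\ge |g(z)|\gtrsim\we(|z|)$ by the estimate \eqref{e_GG} established in Section~\ref{ss_abs_prf}. For the upper bound, I would use the rapid weight hypothesis through the observation that $t\mapsto t/\we(t)$ is continuous on $[0,+\infty)$, vanishes at $t=0$ since $\we(0)>0$, and satisfies $t/\we(t)\to 0$ as $t\to+\infty$ because $\we$ is rapid. Hence the ratio is bounded on $[0,+\infty)$, so $|z|\le M\we(|z|)$ for some constant $M$, and therefore $|f(z)|\le |g(z)|+|z|\lesssim\we(|z|)$. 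Combining the two sides gives $|f|\asymp\we$, and properness of $f$ follows at once from $\we(t)\to+\infty$.

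Finally, to conclude that $f$ is actually a \emph{proper holomorphic embedding}, I would note that a proper continuous injective map into a locally compact Hausdorff space is automatically a homeomorphism onto its (closed) image; combined with the non-vanishing derivative this promotes $f$ to an embedding of complex manifolds. There is essentially no obstacle in this proof: every ingredient is either already in the paper (the map $g$ and its growth bounds from Section~\ref{ss_abs_prf}, the general setup of Corollary~\ref{c_cbb_imm}) or is a one-line consequence of the rapid weight hypothesis. The only point one should be careful about is not to forget that the bound $|z|\le M\we(|z|)$ would fail without the rapid assumption, which is precisely why the statement is restricted to rapid weights.
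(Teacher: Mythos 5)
Your proof is correct and takes essentially the same route as the paper, whose own proof also consists of appending the identity coordinate $z$ as a fourth component (there, to the immersion of Corollary~\ref{c_cbb_imm}) to get the embedding into $\Cbb^4$. You additionally make explicit two points the paper leaves implicit: that rapidness yields $|z|\le M\we(|z|)$, so the extra coordinate does not spoil $|f|\asymp\we$, and that the rotation trick of Corollary~\ref{c_cbb_imm} is unnecessary once the fourth slot guarantees both injectivity and a nowhere-vanishing derivative.
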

\begin{proof}
Corollary~\ref{c_cbb_imm} provides a proper holomorphic immersion $g=(g_1, g_2, g_3): \Cbb\to\Cbb^3$
such that $|g|\asymp \we$. It remains to set $f(z)=(g_1(z), g_2(z), g_3(z), z)$, $z\in\Cbb$.
\end{proof}

Similarly, Theorem~\ref{t_abstract_d} implies the following result.
\begin{corollary}\label{c_cd_emb}
Let $\we$ be a log-tropical rapid radial weight on $\cd$, $d\ge 1$. Then there exists a number $n=n(d)$
and a proper holomorphic embedding $f: \cd\to\Cbb^{n(d)}$ such that $|f|\asymp \we$.
\end{corollary}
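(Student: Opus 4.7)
The plan is to reduce the statement immediately to Theorem~\ref{t_abstract_d} and then turn the resulting map into an embedding by appending the identity coordinates of $\cd$. First, since $\we$ is log-tropical we have $\we\in\Cbb_{\trop}$, so Theorem~\ref{t_abstract_d} yields an integer $N=N(d)$ and a holomorphic map $g=(g_1,\dots,g_N):\cd\to\Cbb^N$ such that $|g(z)|\asymp\we(z)$ for all $z\in\cd$. I then set $n(d)=N+d$ and define
\[
f(z)=\bigl(g_1(z),\dots,g_N(z),z_1,\dots,z_d\bigr):\cd\to\Cbb^{n(d)}.
\]

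Next, I verify the four required properties. For injectivity, if $f(z)=f(z')$ then the last $d$ coordinates give $z=z'$. For the immersion property, the Jacobian matrix of $f$ contains the $d\times d$ identity block arising from the last $d$ coordinates, so its rank equals $d$ at every point. For properness, $|f(z)|\ge|z|$, so preimages of bounded sets in $\Cbb^{n(d)}$ are bounded in $\cd$, hence relatively compact. Finally, to see that $|f|\asymp\we$, I use that $\we$ is rapid: in particular $|z|\le C_0\we(z)$ for all $z\in\cd$ (separately on a bounded neighborhood of $0$, where $\we$ is bounded below by $\we(0)>0$, and on the complement, where rapidness gives $|z|\ll\we(z)$). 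Combined with $|g|\asymp\we$ this yields
\[
\we^2(z)\lesssim|g(z)|^2\le|f(z)|^2=|g(z)|^2+|z|^2\lesssim\we^2(z),
\]
which is exactly $|f|\asymp\we$.

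I expect no substantive obstacle: once Theorem~\ref{t_abstract_d} is invoked, the embedding is produced by the standard trick of adjoining the coordinate map, and the only point that genuinely uses a hypothesis of the corollary is the last estimate above, where the rapidness of $\we$ is precisely what lets the polynomially growing extra coordinates be absorbed into $\we$. This mirrors the argument of Corollary~\ref{c_cbb_emb} in dimension one, but here no preliminary immersion step is needed because adjoining all $d$ coordinates of $\cd$ simultaneously achieves injectivity and the immersion property at once.
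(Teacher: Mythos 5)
Your proof is correct and is essentially the argument the paper intends: Corollary~\ref{c_cd_emb} is presented as a direct consequence of Theorem~\ref{t_abstract_d}, proved exactly as Corollary~\ref{c_cbb_emb} by adjoining the coordinate functions, with the rapidness of $\we$ used precisely where you use it, namely to absorb $|z|\lesssim\we(z)$ into the upper estimate. Your remark that adjoining all $d$ coordinates yields injectivity, full-rank Jacobian, and properness simultaneously (so that no preliminary immersion step as in Corollary~\ref{c_cbb_imm} is required) is a harmless streamlining of the same approach.
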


\subsection{Operators on the growth spaces}\label{ss_GrSpaces}
%\begin{remark}\label{r_Td_appl}
Standard applications of the property $\we\in\cd_{\app}$, $d\ge 1$,
are related to concrete operators on the growth space $\mathcal{A}^{\we}(\cd)$;
in particular, Volterra type operators are considered in \cite{AD17}.
Also, see \cite{AD12}, \cite{AD15} and references therein for various applications
of Theorem~\ref{t_blms} and its generalizations
in the setting of the growth spaces on the unit ball of $\cd$, $d\ge 1$.
%%composition and multiplication operators, Volterra type operators and extended Ces\'aro operators.
%\end{remark}

\bibliographystyle{amsplain}      % mathematics and physical sciences

\begin{thebibliography}{10}

\bibitem{AD12}
E.~Abakumov and E.~Doubtsov, \emph{Reverse estimates in growth spaces}, Math.
  Z. \textbf{271} (2012), no.~1-2, 399--413.

\bibitem{AD15}
\bysame, \emph{Moduli of holomorphic functions and logarithmically convex
  radial weights}, Bull. Lond. Math. Soc. \textbf{47} (2015), no.~3, 519--532.

\bibitem{ADcras}
\bysame, \emph{Growth of proper holomorphic maps and tropical power series}, C.
  R. Math. Acad. Sci. Paris \textbf{354} (2016), no.~5, 465--469.

\bibitem{AD17}
\bysame, \emph{Volterra type operators on growth {F}ock spaces}, Arch. Math.
  (Basel) \textbf{108} (2017), no.~4, 383--393.

\bibitem{Aab86}
A.~B. Aleksandrov, \emph{Proper holomorphic mappings from the ball to the
  polydisk}, Dokl. Akad. Nauk SSSR \textbf{286} (1986), no.~1, 11--15
  (Russian); English transl.: Soviet Math. Dokl. {\bf 33} (1) (1986), 1--5.

\bibitem{AlexH77}
H.~Alexander, \emph{Explicit imbedding of the (punctured) disc into {${\bf
  C}^{2}$}}, Comment. Math. Helv. \textbf{52} (1977), no.~4, 539--544.

\bibitem{BBT98}
K.~D. Bierstedt, J.~Bonet, and J.~Taskinen, \emph{Associated weights and spaces
  of holomorphic functions}, Studia Math. \textbf{127} (1998), no.~2, 137--168.

\bibitem{BDL99}
J.~Bonet, P.~Doma{\'n}ski, and M.~Lindstr{\"o}m, \emph{Essential norm and weak
  compactness of composition operators on weighted {B}anach spaces of analytic
  functions}, Canad. Math. Bull. \textbf{42} (1999), no.~2, 139--148.

\bibitem{Bo98}
A.~Borichev, \emph{The polynomial approximation property in {F}ock-type
  spaces}, Math. Scand. \textbf{82} (1998), no.~2, 256--264.

\bibitem{Dom59}
Y.~Domar, \emph{Closed primary ideals in a class of {B}anach algebras}, Math.
  Scand. \textbf{7} (1959), 109--125.

\bibitem{Dou14}
E.~Doubtsov, \emph{Harmonic approximation by finite sums of moduli}, J. Math.
  Anal. Appl. \textbf{430} (2015), no.~2, 685--694.

\bibitem{EG92}
Y.~Eliashberg and M.~Gromov, \emph{Embeddings of {S}tein manifolds of dimension
  {$n$} into the affine space of dimension {$3n/2+1$}}, Ann. of Math. (2)
  \textbf{136} (1992), no.~1, 123--135.

\bibitem{EK56}
P.~Erd{\"o}s and T.~K{\"o}v{\'a}ri, \emph{On the maximum modulus of entire
  functions}, Acta Math. Acad. Sci. Hungar. \textbf{7} (1956), 305--317.

\bibitem{FW13}
F.~Forstneri{\v{c}} and E.~F. Wold, \emph{Embeddings of infinitely connected
  planar domains into {$\mathbb{C}\sp 2$}}, Anal. PDE \textbf{6} (2013), no.~2,
  499--514.

\bibitem{G02Edin}
J.~Globevnik, \emph{On growth of holomorphic embeddings into {$\mathbb{C}^2$}},
  Proc. Roy. Soc. Edinburgh Sect. A \textbf{132} (2002), no.~4, 879--889.

\bibitem{GS95}
J.~Globevnik and B.~Stens{\o}nes, \emph{Holomorphic embeddings of planar
  domains into {${\bf C}\sp 2$}}, Math. Ann. \textbf{303} (1995), no.~4,
  579--597.

\bibitem{IMS09book}
I.~Itenberg, G.~Mikhalkin, and E.~Shustin, \emph{Tropical algebraic geometry},
  second ed., Oberwolfach Seminars, vol.~35, Birkh\"auser Verlag, Basel, 2009.

\bibitem{KN1969}
K.~Kasahara and Nishino T., As announced in {M}ath {R}eviews \textbf{38}
  (1969), \#4721.

\bibitem{Ki84}
C.~O. Kiselman, \emph{Croissance des fonctions plurisousharmoniques en
  dimension infinie}, Ann. Inst. Fourier (Grenoble) \textbf{34} (1984), no.~1,
  155--183.

\bibitem{Ki14}
\bysame, \emph{Questions inspired by {M}ikael {P}assare's mathematics}, Afr.
  Mat. \textbf{25} (2014), no.~2, 271--288.

\bibitem{Koo64}
P.~Koosis, \emph{Sur l'approximation pond\'er\'ee par des polynomes et par des
  sommes d'expo\-nentielles imaginaires}, Ann. Sci. \'Ecole Norm. Sup. (3)
  \textbf{81} (1964), 387--408.

\bibitem{Lau73}
H.~B. Laufer, \emph{Imbedding annuli in {${\bf C}^{2}$}}, J. Analyse Math.
  \textbf{26} (1973), 187--215.

\bibitem{Lu89}
D.~S. Lubinsky, \emph{Strong asymptotics for extremal errors and polynomials
  associated with {E}rd{\H o}s-type weights}, Pitman Research Notes in
  Mathematics Series, vol. 202, Longman Scientific \& Technical, Harlow, 1989.

\bibitem{Mer58}
S.~N. Mergelyan, \emph{Weighted approximations by polynomials}, American
  {M}athematical {S}ociety {T}ranslations, {S}er. 2, {V}ol. 10, American
  Mathematical Society, Providence, R.I., 1958, pp.~59--106.

\bibitem{RW83}
J.~Ryll and P.~Wojtaszczyk, \emph{On homogeneous polynomials on a complex
  ball}, Trans. Amer. Math. Soc. \textbf{276} (1983), no.~1, 107--116.

\bibitem{Sch95}
U.~Schmid, \emph{On the approximation of positive functions by power series},
  J. Approx. Theory \textbf{83} (1995), no.~3, 342--346.

\bibitem{Sch98}
\bysame, \emph{On the approximation of positive functions by power series.
  {II}}, J. Approx. Theory \textbf{92} (1998), no.~3, 486--501.

\bibitem{Steh70}
J.-L. Stehl{\'e}, \emph{Plongements du disque dans {$C^{2}$}}, S\'eminaire
  {P}ierre {L}elong ({A}nalyse), {A}nn\'ee 1970--1971, Springer, Berlin, 1972,
  pp.~119--130. Lecture Notes in Math., Vol. 275.

\end{thebibliography}

\end{document}